\documentclass[reqno]{amsart}
\usepackage{amsthm}
\usepackage{amsmath}
\usepackage{amssymb}
\usepackage{amsfonts}
\usepackage{txfonts}
\usepackage[margin=1.5in]{geometry}
\usepackage{hyperref}

\newtheorem{theorem}{Theorem}[section]
\newtheorem{definition}[theorem]{Definition}
\newtheorem{corollary}[theorem]{Corollary}
\newtheorem{proposition}[theorem]{Proposition}
\newtheorem{conjecture}[theorem]{Conjecture}
\newtheorem{lemma}[theorem]{Lemma}
\newtheorem{remark}[theorem]{Remark}
\newtheorem{question}[theorem]{Question}

\DeclareMathOperator{\Alb}{Alb}
\DeclareMathOperator{\Stab}{Stab}
\DeclareMathOperator{\NS}{NS}
\DeclareMathOperator{\Pic}{Pic}
\DeclareMathOperator{\GL}{GL}
\DeclareMathOperator{\ID}{ID}
\DeclareMathOperator{\Tr}{Tr}
\DeclareMathOperator{\Tors}{Tors}
\DeclareMathOperator{\Preper}{Preper}
\DeclareMathOperator{\End}{End}

\author{Holly Krieger and Paul Reschke}
\date{\today}
\title{Cohomological Conditions on Endomorphisms of Projective Varieties}

\thanks{The first author was partially supported by NSF grant DMS-1303770. The second author was partially supported by NSF grants DMS-0943832 and DMS-1045119}

\begin{document}

\maketitle

\begin{abstract}
We characterize possible periodic subvarieties for surjective endomorphisms of complex abelian varieties in terms of the eigenvalues of the cohomological actions induced by the endomorphisms, extending previous work in this direction by Pink and Roessler \cite{PinRoe}. By applying our characterization to induced endomorphisms of Albanese varieties, we draw conclusions about the dynamics of surjective endomorphisms for a broad class of projective varieties. We also analyze several classes of surjective endomorphisms that are distinguished by properties of their cohomological actions.
\end{abstract}

\section{Introduction}

In this note, we study the nature of periodic subvarieties for endomorphisms of smooth complex projective varieties. The starting point for our investigation is a theorem due to Pink and Roessler:

\begin{theorem}[\cite{PinRoe}, Theorem 2.4]\label{PinRoe}
Let \(f: A \rightarrow A \) be an isogeny of a complex abelian variety \(A\), and suppose that no eigenvalue of $f^* |_{H^{1,0}(A)}$ is a root of unity. Suppose that \(V \subseteq A\) is a reduced and irreducible subvariety satisfying \(f(V)=V\). Then \(V\) is a translate of an abelian subvariety of \(A\).
\end{theorem}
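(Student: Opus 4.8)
The plan is to reduce to the case where $V$ has trivial connected stabilizer, where $V$ is automatically of general type; general-type rigidity then forces some iterate of $f$ to restrict to the identity on $V$, and the cohomological hypothesis forces this to happen only when $\dim V = 0$.

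\emph{Reduction to trivial stabilizer.} Let $B := \Stab(V)^{\circ}$, the identity component of $\Stab(V) = \{a \in A : a+V = V\}$; it is an abelian subvariety of $A$. From $f(a+V) = f(a)+V$ one sees $f(\Stab(V)) \subseteq \Stab(V)$, and since $f$ has finite kernel its restriction to $B$ is an isogeny, so $f(B) = B$; hence $f$ descends to an isogeny $\bar f$ of $A/B$ with $\bar f(\bar V) = \bar V$, where $\bar V \subseteq A/B$ is the image of $V$. The quotient map $\pi \colon A \to A/B$ is $f$-equivariant, so $\pi^{*} \colon H^{1,0}(A/B) \hookrightarrow H^{1,0}(A)$ intertwines $\bar f^{*}$ with $f^{*}$; hence the eigenvalues of $\bar f^{*}$ on $H^{1,0}(A/B)$ lie among those of $f^{*}$ on $H^{1,0}(A)$ and none is a root of unity. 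One checks directly that $\Stab(\bar V)^{\circ} = 0$. Replacing $(A,f,V)$ by $(A/B,\bar f,\bar V)$, I may thus assume $\Stab(V)^{\circ} = 0$; it then suffices to prove $\dim V = 0$, since then $V$ is a translate of the trivial abelian subvariety and, undoing the reduction, the original $V$ is a translate of $B$.

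\emph{Rigidity from general type.} Suppose for contradiction that $\dim V \ge 1$. Since $V$ is a subvariety of an abelian variety with finite stabilizer, a theorem of Ueno and Kawamata shows that $V$ is of general type. A variety of general type admits no dominant rational self-map of degree greater than one, so the finite surjective morphism $f|_V \colon V \to V$ is a birational automorphism of $V$; and $\operatorname{Bir}(V)$ is finite, since varieties of general type have finite birational automorphism group (Matsumura). Hence $(f|_V)^{N} = \mathrm{id}$ in $\operatorname{Bir}(V)$ for some $N \ge 1$, and as $f^{N}|_V$ and $\mathrm{id}_V$ are morphisms agreeing on a dense open subset of the reduced variety $V$, they are equal. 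Thus $f^{N}|_V = \mathrm{id}_V$, i.e.\ $V \subseteq \ker(f^{N} - \mathrm{id}_A)$.

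\emph{Conclusion, and the main obstacle.} The operator $(f^{N} - \mathrm{id}_A)^{*}$ on $H^{1}(A)$ equals $(f^{*})^{N} - \mathrm{id}$, whose eigenvalues are $\mu^{N} - 1$ for $\mu$ an eigenvalue of $f^{*}$ on $H^{1}(A) = H^{1,0}(A) \oplus H^{0,1}(A)$; these $\mu$ are the eigenvalues of $f^{*}$ on $H^{1,0}(A)$ together with their complex conjugates, and as roots of unity are closed under conjugation, the hypothesis guarantees no $\mu$ is an $N$-th root of unity. Therefore $(f^{N} - \mathrm{id}_A)^{*}$ is invertible on $H^{1}(A)$, so $f^{N} - \mathrm{id}_A$ is an isogeny and $\ker(f^{N} - \mathrm{id}_A)$ is finite; then $\dim V = 0$, contradicting $\dim V \ge 1$, and hence $\dim V = 0$. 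The heart of the argument is the interplay in the last two steps: general-type rigidity forces $V$ to be fixed pointwise by some iterate $f^{N}$, and then the root-of-unity hypothesis --- which makes $\ker(f^{N} - \mathrm{id}_A)$ finite --- forces the dimension to drop to zero. I expect the only real care to be needed in checking the $f$-equivariance used in the descent of the first step; the general-type inputs (the theorems of Ueno--Kawamata and Matsumura, and the birationality of dominant self-maps of varieties of general type) are standard.
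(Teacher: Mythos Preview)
Your proof is correct and follows essentially the same strategy as the paper: reduce modulo the connected stabilizer $B = \Stab^0_A(V)$, observe via Ueno's theorem that the image $\bar V$ in $A/B$ is of general type, invoke general-type rigidity to force some iterate of $\bar f$ to restrict to the identity on $\bar V$, and then use the eigenvalue hypothesis to conclude. The differences are minor: for the rigidity step you cite Matsumura's finiteness of $\operatorname{Bir}(V)$ together with the fact that a dominant self-map of a general-type variety has degree one, while the paper cites Kobayashi--Ochiai's finiteness of dominant rational maps (Theorem~\ref{KobOch}); for the final step you argue directly that $\bar V \subseteq \ker(\bar f^{N} - \mathrm{id}_{A/B})$ and that this kernel is finite, whereas the paper (because it is really proving the more refined Corollary~\ref{PinRoeCor}) passes through the abelian subvariety generated by a translate of $\bar V$ and counts root-of-unity eigenvalues via Lemma~\ref{EigenLemma}. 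Your route is slightly more direct for the statement at hand; the paper's route is tailored to yield the quantitative bound $\kappa(V) \le u_f$.
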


By the Lefschetz Fixed-Point Theorem, the eigenvalue condition in Theorem \ref{PinRoe} guarantees that $f$ has a fixed point, and therefore is conjugate by a translation to an isogeny, even if $f$ is only assumed to be a surjective endomorphism (i.e., not necessarily a homomorphism).  Thus the conclusion holds for any surjective endomorphism $f$ satisfying the eigenvalue condition. (See \S \ref{Eigenvals} and \S \ref{UnityFree1} below.) We will not assume in the following that a surjective endomorphism of an abelian variety is an isogeny. 

We extend Theorem \ref{PinRoe} to the case where \(f^*\) may have eigenvalues on \(H^{1,0}(A)\) that are roots of unity; here, $\kappa(V)$ denotes the Kodaira dimension of any smooth birational model of a variety $V$:

\begin{theorem}\label{MainThm}
Let \(f\) be a surjective endomorphism of a complex abelian variety \(A\), and suppose that \(V \subseteq A\) is a reduced and irreducible subvariety satisfying \(f(V)=V\). Then there is a reduced and irreducible subvariety \(W \subseteq V\) with \(\kappa(W)=\dim(W)=\kappa(V)\), and some iterate $f^k$, such that \(V=\Stab^0_A(V)+W\) and \(f^k(\Stab^0_A(V)+w)=\Stab^0_A(V)+w\) for every \(w \in W\).
\end{theorem}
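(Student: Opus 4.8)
The plan is to quotient $A$ by $B := \Stab^0_A(V)$, observe that the image of $V$ in $A' := A/B$ is of general type and inherits an invariant surjective endomorphism, invoke the rigidity of varieties of general type to see that some iterate of this induced endomorphism restricts to the identity on the image of $V$, and then obtain $W$ as a general linear section of $V$.

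Let $\pi : A \to A' := A/B$ be the quotient homomorphism. Since every morphism of abelian varieties is the composition of a homomorphism with a translation, and $f$ is surjective, we may write $f = t_c \circ \phi$ with $\phi \in \End(A)$ an isogeny and $t_c$ translation by $c := f(0)$. From $f(V) = V$ we get $\phi(V) = V - c$, so $\phi$ preserves $\Stab_A(V)$ (a translation does not affect the stabilizer), and hence $\phi(B) \subseteq B$; as $\phi|_B$ has finite kernel, $\phi(B) = B$. Therefore $f$ descends to a surjective endomorphism $f'$ of $A'$ with $f'(V') = V'$, where $V' := \pi(V)$ is reduced and irreducible. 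Because $B \subseteq \Stab_A(V)$ one has $V = \pi^{-1}(V')$, and from this a direct computation shows $\Stab_{A'}(V') = \pi(\Stab_A(V))$, so $\Stab^0_{A'}(V') = \pi(B) = 0$. By Ueno's theorem on subvarieties of abelian varieties, $\kappa(V') = \dim(V') - \dim \Stab^0_{A'}(V') = \dim(V')$ and $\kappa(V) = \dim(V) - \dim(B) = \dim(V')$; in particular $V'$ is of general type.

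Now $f'|_{V'} : V' \to V'$ is a dominant self-morphism of a variety of general type. Any dominant rational self-map of a variety of general type is birational: resolving its graph and comparing canonical volumes, one uses that $\mathrm{vol}(K)$ is multiplied by the degree under a generically finite pullback, is unchanged under a birational modification, and is strictly positive. Moreover the group of birational self-maps of a variety of general type is finite, so some power $\beta^k$ of the induced birational automorphism $\beta$ of $V'$ is the identity; since $(f'|_{V'})^k$ is a morphism of the reduced separated variety $V'$ that agrees with the identity on a dense open subset, $(f'|_{V'})^k = \mathrm{id}_{V'}$. Hence for every $v \in V$ we have $\pi(f^k(v)) = (f')^k(\pi(v)) = \pi(v)$, so $f^k(v) \in v + B$; writing $f^k$ as a translate of $\phi^k$ and using $\phi^k(B) = B$ gives $f^k(v + B) = v + B$ for every $v \in V$.

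Finally, let $W \subseteq V$ be a general linear section of $V$, in some projective embedding, of dimension $\dim(V') = \kappa(V)$. By Bertini $W$ is reduced and irreducible, and $W \subseteq V$. Since the fibres of $\pi|_V : V \to V'$ are translates of $B$ — of positive dimension unless $B = 0$, in which case $W = V$ and there is nothing further to check — and a positive-dimensional projective variety meets every hyperplane, a general linear section of $V$ still surjects onto $V'$, so $\pi(W) = V'$ and therefore $B + W = \pi^{-1}(V') = V$. As $\pi|_W : W \to V'$ is dominant and generically finite onto the general-type variety $V'$ we get $\kappa(V') \le \kappa(W) \le \dim(W) = \dim(V') = \kappa(V')$, so $\kappa(W) = \dim(W) = \kappa(V)$; and $f^k(B + w) = B + w$ for $w \in W$ is a special case of the previous paragraph, since $B = \Stab^0_A(V)$. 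The main obstacle is the rigidity input used above — that a dominant endomorphism of a variety of general type becomes the identity after iteration — and once this and Ueno's theorem are in hand, the descent to $A/B$ and the construction of $W$ are routine.
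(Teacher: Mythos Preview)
Your proof is correct and follows the same overall architecture as the paper: pass to the quotient $A/B$ with $B=\Stab^0_A(V)$, use Ueno to see that the image $V'$ is of general type, and invoke rigidity of general-type varieties to force an iterate of the induced map to be the identity on $V'$. The rigidity input you use (degree-one via canonical volume, then finiteness of the birational automorphism group) is equivalent to the paper's direct appeal to Kobayashi--Ochiai's finiteness of dominant rational maps to a variety of general type.

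The one genuine difference is in how $W$ is produced. The paper applies the Poincar\'e Reducibility Theorem to choose a complementary abelian subvariety $T\subseteq A$ with $B+T=A$ and $B\cap T$ finite, and takes $W$ to be an irreducible component of $V\cap T$; then $q|_T$ is an isogeny, so $W\to V'$ is automatically finite surjective. Your construction instead takes $W$ as a general linear section of $V$ of the correct dimension and uses the projective dimension theorem to guarantee that every fibre $B+v$ meets the linear space, hence $\pi(W)=V'$. Both are valid; the paper's choice is more intrinsic to the abelian-variety setting and avoids Bertini, while yours is a purely projective-geometric argument that would work for any fibre bundle with positive-dimensional projective fibres.
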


The proof of Theorem \ref{MainThm} has a similar flavor to the proof of Theorem \ref{PinRoe} by Pink and Roessler: by Ueno \cite{Uen1}, all subvarieties of \(A\) can be built from tori and varieties of general type; we then apply Kobayashi and Ochiai \cite{KobOch}, which states that every rational self-map of a variety of general type has finite order. (See \S \ref{CharSubvars} and \S \ref{DomRatMaps} below.) Note that \(W\) may be singular or zero-dimensional; in particular, if \(\kappa(V)=0\), then \(V\) is a translate of an abelian subvariety of \(A\). 

As a corollary of Theorem \ref{MainThm}, we recover the following mild strengthening of the theorem of Pink and Roessler:

\begin{corollary}\label{PinRoeCor} Let \(f\) be a surjective endomorphism of a complex abelian variety \(A\), and suppose that \(V \subseteq A\) is a reduced and irreducible subvariety satisfying \(f(V)=V\). Let $u_f$ denote the number of root-of-unity eigenvalues of $f^* |_{H^{1,0}(A)}$ with multiplicity.  Then 
$$\kappa(V) \leq u_f;$$
in fact, the inequality is strict except possibly if \(\kappa(V)=0\).
\end{corollary}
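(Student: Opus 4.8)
The plan is to deduce Corollary~\ref{PinRoeCor} from Theorem~\ref{MainThm} by transporting the dynamical conclusion to the quotient abelian variety $B := A/\Stab^0_A(V)$, with quotient map $\pi\colon A \to B$, and reading off eigenvalues there. First I would record two preliminary facts. (i) Writing $f = t_c \circ \psi$ with $\psi$ a surjective homomorphism, the identity $f(V)=V$ forces $\psi(\Stab_A(V)) \subseteq \Stab_A(V)$ and hence $\psi(\Stab^0_A(V)) \subseteq \Stab^0_A(V)$; consequently $f^k$ induces a surjective endomorphism $g$ of $B$ with $\pi \circ f^k = g \circ \pi$. (ii) By Ueno~\cite{Uen1}, $\kappa(V) = \dim V - \dim \Stab^0_A(V) = \dim \pi(V)$ and the image $\pi(V) = V/\Stab^0_A(V)$ is a subvariety of $B$ of general type; since $V = \Stab^0_A(V) + W$ we have $\pi(W) = \pi(V)$, so $\pi(W)$ is a general-type subvariety of $B$ of dimension exactly $\kappa(V)$ (in particular $\pi|_W$ is generically finite).

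Next I would translate the conclusion of Theorem~\ref{MainThm}. The condition $f^k(\Stab^0_A(V)+w) = \Stab^0_A(V)+w$ for every $w \in W$ says precisely that $g$ fixes every point of $\pi(W)$. Write $g = t_e \circ \eta$ with $\eta$ a homomorphism, and let $B' \subseteq B$ be the abelian subvariety generated by $\pi(W) - \pi(W)$. For $z_1,z_2 \in \pi(W)$ the fixed-point condition gives $\eta(z_1 - z_2) = \eta(z_1) - \eta(z_2) = z_1 - z_2$, so $\eta$ is the identity on a Zariski-dense subgroup of $B'$, whence $\eta|_{B'} = \mathrm{id}_{B'}$; in particular $\eta(B') = B'$, so $\eta$ preserves $B'$ and the short exact sequence $0 \to H^{1,0}(B/B') \to H^{1,0}(B) \to H^{1,0}(B') \to 0$ is $\eta^*$-equivariant with $\eta^*$ acting trivially on the last term. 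Hence the characteristic polynomial of $\eta^* = g^*$ on $H^{1,0}(B)$ is divisible by $(x-1)^{\dim B'}$, i.e.\ the eigenvalue $1$ occurs with multiplicity at least $\dim B'$.

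Then I would transfer this back to $A$. Since $\pi$ is surjective, $\pi^*\colon H^{1,0}(B) \hookrightarrow H^{1,0}(A)$ is injective, and $\pi \circ f^k = g \circ \pi$ gives $(f^*)^k \circ \pi^* = \pi^* \circ g^*$; thus $(f^*)^k$ acts on $H^{1,0}(A)$ with eigenvalue $1$ of multiplicity at least $\dim B'$. Because any eigenvalue of $f^*$ whose $k$-th power equals $1$ is a root of unity, and the generalized $1$-eigenspace of $(f^*)^k$ is the direct sum of the generalized eigenspaces of $f^*$ over all such eigenvalues, it follows that $u_f \geq \dim B'$. As $\pi(W) - z_0 \subseteq B'$ for any $z_0 \in \pi(W)$, we obtain $\dim B' \geq \dim \pi(W) = \kappa(V)$, proving $\kappa(V) \leq u_f$. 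For strictness when $\kappa(V) \geq 1$: if $\dim B' = \dim \pi(W)$, then the irreducible subvariety $\pi(W) - z_0$ would exhaust $B'$, making $\pi(W)$ a translate of the abelian subvariety $B'$ and hence of Kodaira dimension $0$ — contradicting that $\pi(W)$ is of general type of positive dimension; therefore $\dim B' \geq \kappa(V) + 1$ and $u_f \geq \kappa(V)+1 > \kappa(V)$.

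I expect the main obstacle to be essentially bookkeeping rather than conceptual: carefully handling the non-homomorphism translation parts of $f^k$ and $g$ so that the chain "$f^k$ stabilizes each coset $\Stab^0_A(V)+w$" $\Rightarrow$ "$g$ fixes $\pi(W)$ pointwise" $\Rightarrow$ "$\eta$ fixes $B'$ pointwise" is rigorous, together with invoking the precise form of Ueno's structure theorem needed to identify $\pi(W)$ with the general-type base $V/\Stab^0_A(V)$ and to guarantee $\dim \pi(W) = \kappa(V)$. Everything else is a routine eigenvalue count on $H^{1,0}$.
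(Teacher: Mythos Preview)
Your proposal is correct and follows essentially the same route as the paper's own proof: pass to the quotient $A/\Stab^0_A(V)$, use Theorem~\ref{MainThm} to see that the induced map fixes $q(W)$ pointwise, deduce that the isogeny part is the identity on the abelian subvariety generated by a translate of $q(W)$, and then count eigenvalues on $H^{1,0}$ via the quotient (the paper packages this last step as Lemma~\ref{EigenLemma}, while you argue directly via the injection $\pi^*$). Your strictness argument --- that equality would force $\pi(W)$ to be a torus translate, contradicting general type --- is exactly the paper's reasoning as well.
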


Suppose now that \(X\) is an arbitrary smooth complex projective variety, and that \(f\) is a surjective endomorphism of \(X\). Since the Albanese variety \(\Alb(X)\) is generated by the image of \(X\) under the Albanese map \(a_X\), \(f\) induces a surjective endomorphism \(F\) of \(\Alb(X)\); moreover, if \(a_X(X) \neq \Alb(X)\), then \(a_X(X)\) is a reduced and irreducible proper subvariety of \(\Alb(X)\) satisfying \(F(a_X(X))=a_X(X)\). (See \S \ref{AlbIntro} below.) So we can use Theorem \ref{MainThm} to draw conclusions about endomorphisms of varieties with non-surjective Albanese maps, as in:

\begin{corollary}\label{PerSubCor}
Let $X$ be a smooth complex projective variety with \(a_X(X) \neq \Alb(X)\), and suppose that \(f\) is an infinite-order surjective endomorphism of $X$. Then there is a proper positive-dimensional subvariety of \(X\) that is periodic for \(f\).
\end{corollary}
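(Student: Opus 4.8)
The plan is to transport the problem to the Albanese variety and apply Theorem \ref{MainThm}. Put $V := a_X(X)$. As recalled in \S\ref{AlbIntro}, $f$ induces a surjective endomorphism $F$ of $\Alb(X)$ with $a_X \circ f = F \circ a_X$, and the hypothesis $a_X(X) \neq \Alb(X)$ makes $V$ a reduced and irreducible proper subvariety of $\Alb(X)$ with $F(V)=V$. Two preliminary observations are worth isolating: $\dim V \geq 1$, and $V$ is \emph{not} a translate of an abelian subvariety of $\Alb(X)$ --- after normalizing the Albanese map, such a translate would be an abelian subvariety containing $a_X(X)$, hence would equal $\Alb(X)$ since $\Alb(X)$ is generated by $a_X(X)$, forcing $V = \Alb(X)$. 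Now apply Theorem \ref{MainThm} to $F$ and $V$: writing $B := \Stab^0_{\Alb(X)}(V)$, there are a reduced and irreducible $W \subseteq V$ with $\dim W = \kappa(V)$ and an iterate $F^k$ such that $V = B + W$ and $F^k(B + w) = B + w$ for every $w \in W$.

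The candidate periodic set will be $Y := a_X^{-1}(B + w_0) \subseteq X$ for a fixed $w_0 \in W$. Since $B + w_0 \subseteq B + W = V = a_X(X)$, one has $a_X(Y) = B + w_0$, so $Y$ is nonempty; and $Y \neq X$, because $Y = X$ would give $V = a_X(X) = a_X(Y) = B + w_0$, a translate of an abelian subvariety. Moreover $Y$ is $f^k$-invariant, since
\[
a_X(f^k(Y)) = F^k(a_X(Y)) = F^k(B + w_0) = B + w_0 ,
\]
so that $f^k(Y) \subseteq a_X^{-1}(B + w_0) = Y$.

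The crux is to show that $\dim Y \geq 1$. The restriction of $a_X$ surjects $Y$ onto $B + w_0$, so $\dim Y \geq \dim B$; suppose $\dim Y = 0$. Then $B$ is trivial, so $W = B + W = V$ and $\kappa(V) = \dim W = \dim V \geq 1$; that is, $V$ is of general type. But $Y = a_X^{-1}(w_0)$ is then a fibre of the dominant morphism $a_X \colon X \to V$, which forces $\dim X = \dim V$, so $a_X$ is generically finite onto the general-type variety $V$; hence $X$ is of general type as well, and by Kobayashi and Ochiai \cite{KobOch} the endomorphism $f$ has finite order, contrary to hypothesis. Therefore $\dim Y \geq 1$. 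This degenerate case $B = 0$ is where I expect the real work to lie: Theorem \ref{MainThm} yields no nontrivial fibring of $V$ there, and one has to fall back on the infinite-order hypothesis --- not on the cohomological/eigenvalue input --- to produce a positive-dimensional (and then automatically invariant) fibre of $a_X$.

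For the last step I would use that a surjective endomorphism of a projective variety is a finite morphism, so $f^k$ preserves the dimension of every subvariety of $X$. Consequently $f^k$ carries each top-dimensional irreducible component of $Y$ into a top-dimensional irreducible component of $Y$, inducing a self-map of the (finite, nonempty) set of such components; this self-map has a periodic point $C_0$, of some period $l \geq 1$. Then $f^{kl}(C_0) \subseteq C_0$, and since $f^{kl}$ preserves dimension and $C_0$ is irreducible, $f^{kl}(C_0) = C_0$. Thus $C_0$ is a reduced and irreducible subvariety of $X$ with $\dim C_0 = \dim Y \geq 1$, contained in $Y \subsetneq X$ and hence proper, that is periodic for $f$ --- as required.
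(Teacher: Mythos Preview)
Your argument is correct and follows essentially the same route as the paper: push to the Albanese, apply Theorem~\ref{MainThm} to $V=\alpha_X(X)$, take $Y=\alpha_X^{-1}(B+w_0)$, and in the degenerate case $\dim B=0$ use that $\alpha_X$ is then generically finite onto a general-type variety to force $X$ to be of general type, contradicting infinite order via Kobayashi--Ochiai. The only noteworthy differences are cosmetic: the paper asserts $f^k(Y)=Y$ directly (which is true but needs the observation that $F_B^k$ is the identity on $q(V)$, so $(F^k)^{-1}(B+w)\cap V=B+w$), whereas you prove only $f^k(Y)\subseteq Y$ and then extract a periodic irreducible component---a clean way to sidestep that verification; and you deduce $Y\neq X$ from the fact that $V$ cannot be a coset of an abelian subvariety, while the paper uses $\dim W=\kappa(V)>0$ (both arguments rest on the universal property).
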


Note that any variety $X$ in Corollary \ref{PerSubCor} must have \(\kappa(a_X(X))>0\); but it will follow from the proof that this is not necessarily true for the periodic subvariety, so this corollary cannot be used for induction. Note also that a smooth curve with a non-surjective Albanese map is necessarily hyperbolic and hence, by the De Franchis Theorem, does not admit any infinite-order endomorphisms. If we write \(a_X(X)=B+W\), where \(B\) is the stabilizer of \(a_X(X)\) in \(\Alb(X)\) and \(W\) has \(\kappa(W)=\dim(W)\), then the periodic subvariety in Corollary \ref{PerSubCor} can be taken to be the pre-image under \(a_X\) of \(B+w\) for any \(w \in W\). (See \S \ref{AlbCors} below.) 

We turn now to an assessment of certain classes of endomorphisms that are characterized by cohomological properties.

\begin{definition}[\cite{NakZha},\cite{Zha}]\label{Polarized}
Let \(f\) be a surjective endomorphism of a projective variety \(X\). We say that \(f\) is \textbf{polarized} if there is an ample line bundle \(L \in \Pic(X)\) such that \(f^*(L)=L^{\otimes q}\) for some integer \(q > 1\).
\end{definition}

The study of polarized endomorphisms - and those varieties which carry them - is of particular interest to dynamicists.  For an endomorphism $f$ of a complex variety $X$, Fakhruddin \cite{Fak} showed that the condition that $f$ is polarized is equivalent to the existence of an embedding $i:X \rightarrow \mathbb{P}^N$ and a morphism $F: \mathbb{P}^N \rightarrow \mathbb{P}^N$ so that $i \circ f = F \circ i$; in the case where $f$ is defined over a field with arithmetic (i.e. a number field or function field), Call and Silverman \cite{CalSil} showed that polarization implies the existence of a dynamical canonical height function on $X$, an important tool in arithmetic dynamics. 

If \(f\) is a polarized endomorphism, then the ample line bundle \(L\) satisfying \(f^*(L)=L^{\otimes q}\) also has the property that \(f^*(L) \otimes L^{-1}\) is ample, which leads to a generalization of the notion of polarization.

\begin{definition}\label{Amplified}
Let \(f\) be a surjective endomorphism of a projective variety \(X\). We say that $f$ is \textbf{amplified} if there is a line bundle \(L \in \Pic(X)\) such that \(f^*(L) \otimes L^{-1}\) is ample.
\end{definition}

Note that the line bundle $L$ in Definition \ref{Amplified} need not itself be ample in general. Suppose that \(f\) is an amplified endomorphism of a variety \(X\). A theorem due to Fakhruddin \cite{Fak} states that the set of periodic points for \(f\) is Zariski dense in $X$. If \(V\) is a subvariety of \(X\) satisfying \(f(V)=V\), then the restriction of \(f\) to \(V\) is again amplified; so the periodic points for \(f\) include a Zariski dense subset of \(V\) as well.  In particular, amplified endomorphisms satisfy one direction of a dynamical version of the Manin-Mumford conjecture; indeed, we observe that the class of endomorphisms for which a dynamical Manin-Mumford conjecture can be made sensible is likely to be precisely the class of amplified endomorphisms.  (See \S \ref{DMM} and \S \ref{BasicProps} below.) Thus the study of varieties carrying amplified endomorphisms is again of dynamical interest.

\begin{definition}\label{Unity-Free}
Let \(f\) be a surjective endomorphism of a projective variety \(X\) with \(\Alb(X) \neq \{0\}\). We say that $f$ is \textbf{unity-free} if no eigenvalue of $f^* |_{H^{1,0}(X)}$ is a root of unity.
\end{definition}

Note that the condition \(\Alb(X) \neq \{0\}\) in Definition \ref{Unity-Free} is equivalent to the condition \(H^{1,0}(X) \neq \{0\}\) (so that \(f^*\) has at least one eigenvalue on \(H^{1,0}(X)\)). (See \S \ref{AlbIntro} below.) The hypothesis on \(f\) in Theorem \ref{PinRoe} is that it is unity-free. In this case, the conclusion that \(f\) has a Zariski dense set of periodic points in every periodic subvariety follows from Lemma \ref{EigenLemma} and Proposition \ref{MainProp} below - without the requirement that $f$ is amplified. (See \S \ref{DMM} below.) 

Theorem \ref{MainThm} restricts the set of varieties which admit unity-free endomorphisms:

\begin{corollary}\label{PolEndCor} Let $X$ be a smooth complex projective variety with \(a_X(X) \neq \Alb(X)\). Then \(X\) does not admit a unity-free endomorphism.
\end{corollary}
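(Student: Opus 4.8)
The plan is to reduce to the abelian-variety setting and then invoke Theorem \ref{MainThm} (equivalently, Corollary \ref{PinRoeCor}). Suppose for contradiction that $f$ is a unity-free surjective endomorphism of $X$, and put $A = \Alb(X)$ and $V = a_X(X)$. By hypothesis $V \neq A$, so by the discussion in \S\ref{AlbIntro} the set $V$ is a reduced and irreducible \emph{proper} subvariety of $A$, and $f$ induces a surjective endomorphism $F$ of $A$ with $F(V) = V$.

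The first step is to check that $F$ is itself unity-free. The Albanese map induces an isomorphism $a_X^*\colon H^{1,0}(A) \to H^{1,0}(X)$, and $a_X \circ f$ and $F \circ a_X$ agree up to a translation of $A$; since translations act trivially on $H^{1,0}(A)$, this gives $f^* \circ a_X^* = a_X^* \circ F^*$ on $H^{1,0}$. Hence $F^*|_{H^{1,0}(A)}$ and $f^*|_{H^{1,0}(X)}$ have the same eigenvalues; in particular none is a root of unity, so $u_F = u_f = 0$.

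Next I would apply Corollary \ref{PinRoeCor} to $F$ and $V$ to get $\kappa(V) \leq u_F = 0$, and Theorem \ref{MainThm} to produce a subvariety $W \subseteq V$ with $\dim(W) = \kappa(V)$. Since $\dim(W) \geq 0$, this forces $\kappa(V) = 0$ and $\dim(W) = 0$, so $V = \Stab^0_A(V) + W$ is a single translate of the abelian subvariety $B := \Stab^0_A(V)$ — this is the $\kappa(V)=0$ case recorded after Theorem \ref{MainThm}. But $a_X(X)$ cannot be a translate of a proper abelian subvariety: if it were, then $a_X$ would factor, up to translation, through an inclusion $B \hookrightarrow A$, yielding an endomorphism $\psi$ of $A$ with image $B$ and with $\psi^* = \mathrm{id}$ on $H^{1,0}(A)$, whence $\psi$ is an isogeny and $B = A$. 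Therefore $B = A$ and $V = A$, contradicting $a_X(X) \neq \Alb(X)$.

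The substance of the argument is entirely contained in Theorem \ref{MainThm}; the only things to verify here are the reduction to $\Alb(X)$ — that the induced endomorphism $F$ inherits the unity-free property, via the compatibility of $f^*$ and $F^*$ on $H^{1,0}$ — and the observation that the $\kappa(V)=0$ output of Theorem \ref{MainThm} is incompatible with $a_X(X)$ being a proper subvariety, by minimality of the Albanese. I do not expect either step to present a real obstacle, so the corollary should be essentially immediate.
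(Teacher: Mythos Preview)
Your proof is correct and follows essentially the same route as the paper. The paper's argument is just slightly terser: it cites from \S\ref{AlbIntro} the fact that $\kappa(\alpha_X(X))>0$ (which is exactly the universal-property observation that $\alpha_X(X)$ cannot sit in a translate of a proper abelian subvariety), and then Corollary~\ref{PinRoeCor} immediately gives $0<\kappa(V)\le u_F$, contradicting $u_F=0$. You instead first conclude $\kappa(V)=0$, pass through Theorem~\ref{MainThm} to get $V$ equal to a translate of an abelian subvariety, and then invoke the universal property at the end; this is the same content, just with the order of the two ingredients reversed. Your justification that $F$ inherits unity-freeness via $a_X^*$ is exactly what the paper records in \S\ref{AlbIntro}, and your final step (ruling out $V$ being a translate of a proper abelian subvariety) is correct, though the standard phrasing---that $\alpha_X(X)$ generates $\Alb(X)$---is a bit cleaner than the $\psi^*=\mathrm{id}$ detour.
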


Note that the Albanese map for a variety is non-trivial if it is non-surjective, so that it makes sense to speak of unity-free endomorphisms in this setting. Corollary \ref{PolEndCor} complements work by Dinh, Nguyen, and Truong \cite{DinNguTru} which under the same hypotheses shows that \(X\) does not admit an endomorphism with distinct consecutive dynamical degrees. The condition that an endomorphism has distinct consecutive dynamical degrees is disjoint from the condition that it is unity-free; on the other hand, it follows from a theorem of Serre \cite{Ser} that every polarized endomorphism satisfies both of these conditions. (See \S \ref{AlbCors} below.)

Fakhruddin \cite{Fak} showed that any variety admitting a polarized endomorphism must have non-positive Kodaira dimension, and a theorem due to Kawamata \cite{Kaw} states than any variety whose Kodaira dimension is zero has a surjective Albanese map; however, there are many examples of varieties with negative Kodaira dimension and non-surjective Albanese maps. In particular, Corollary \ref{PolEndCor} applies to any bundle over a variety whose Albanese map is non-surjective; in this way it is a generalization of the observation by S. Zhang \cite{Zha} that a ruled surface over a hyperbolic curve cannot admit a polarized endomorphism. (See \S \ref{AlbCors} below.) We note also that work by Nakayama and D.-Q. Zhang \cite{NakZha} offers further characterizations of varieties admitting polarized endomorphisms.

We show that Corollary \ref{PolEndCor} applies to amplified endomorphisms as well:

\begin{theorem}\label{EndoThm}
Let \(f\) be a surjective endomorphism of a smooth complex projective variety \(X\) with \(\Alb(X) \neq \{0\}\). If \(f\) is amplified, then it is unity-free.
\end{theorem}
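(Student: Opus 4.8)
The plan is to reduce to the case in which $X$ is an abelian variety, where amplifiedness becomes a strict positivity statement about the analytic representation that cannot tolerate a root-of-unity eigenvalue, and then to transfer that case back to $X$ through the Albanese map. Two routine reductions come first. The pullback of an ample class along any morphism is nef and ample-plus-nef is ample, so the identity $(f^{k})^{*}L\otimes L^{-1}=\bigotimes_{i=0}^{k-1}(f^{i})^{*}\bigl(f^{*}L\otimes L^{-1}\bigr)$ shows that every iterate of an amplified endomorphism is amplified; moreover an eigenvalue of $(f^{k})^{*}|_{H^{1,0}(X)}$ is the $k$-th power of an eigenvalue of $f^{*}|_{H^{1,0}(X)}$, and $\zeta^{k}$ is a root of unity only when $\zeta$ is. So, arguing by contradiction and replacing $f$ by a suitable iterate, I may assume that $f$ is amplified and that $1$ is an eigenvalue of $f^{*}|_{H^{1,0}(X)}$. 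Via $a_{X}^{*}$, which identifies $H^{1,0}(\Alb(X))$ with $H^{1,0}(X)$ compatibly with $f$ and with the induced surjective endomorphism $F$ of $A:=\Alb(X)$, this gives $1$ as an eigenvalue of $F^{*}|_{H^{1,0}(A)}$. Writing $F=t_{c}\circ F_{0}$ with $F_{0}$ a group endomorphism --- necessarily an isogeny, since $a_{X}(X)$ generates $A$ and hence $F$, so $F_{0}$, is surjective --- and $t_{c}$ a translation, $F^{*}$ agrees with $F_{0}^{*}$ on holomorphic forms, and $F_{0}^{*}|_{H^{1,0}(A)}$ is the transpose of the analytic representation of $F_{0}$ on the tangent space $T_{0}A$; hence there is a nonzero $v\in T_{0}A$ fixed by (the analytic representation of) $F_{0}$.

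The heart of the matter is that this is impossible once one knows that $F$ is amplified on $A$, which is Theorem \ref{EndoThm} for abelian varieties and which I would prove as follows. Let $M\in\Pic(A)$ be a line bundle with $F^{*}M\otimes M^{-1}$ ample. Since a translation acts trivially on $\NS(A)$, in the Appell--Humbert description $M$ corresponds to a Hermitian form $\mathcal H$ on $T_{0}A$ --- not necessarily definite, as $M$ itself need not be ample --- for which $\mathcal H(F_{0}\,\cdot\,,F_{0}\,\cdot\,)-\mathcal H(\,\cdot\,,\,\cdot\,)$ is positive definite. Evaluating at the $F_{0}$-fixed vector $v$ gives $\mathcal H(F_{0}v,F_{0}v)-\mathcal H(v,v)=0$, contradicting positive-definiteness. (Equivalently, an amplified endomorphism of an abelian variety acts on $T_{0}A$ as a strict expansion for the metric of $\mathcal H$, hence with every eigenvalue of absolute value $>1$, so it is automatically unity-free.)

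It remains --- and this I expect to be the main obstacle --- to deduce that the induced endomorphism $F$ of $A=\Alb(X)$ is amplified from the assumption that $f$ is amplified on $X$. I would first reduce to the case in which $a_{X}$ is surjective: an amplified endomorphism forces $\kappa(X)\le 0$ (as for polarized endomorphisms, cf.\ Fakhruddin \cite{Fak}), while $\kappa(a_{X}(X))\le\kappa(X)$ and a non-surjective Albanese map has $\kappa(a_{X}(X))>0$, so $a_{X}$ must be surjective. Then $a_{X}\colon X\to A$ is an algebraic fibre space over which $f$ lies, and the ample class $D:=c_{1}(f^{*}L\otimes L^{-1})$ must be converted into a class $\bar D\in\NS(A)$ with $F_{0}^{*}\bar D-\bar D$ ample. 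This is the genuinely difficult step, since $D$ is in general only relatively ample over $A$, so neither restriction nor pushforward produces $\bar D$ directly; I would try to obtain it by using the Hodge--Riemann bilinear relations for $D$ to equip the weight-one Hodge structure $H^{1}(X)=H^{1}(A)$ with a polarization interacting correctly with $F$, or else, bypassing the surjectivity reduction, by applying Theorem \ref{MainThm} together with Kobayashi and Ochiai \cite{KobOch} to $a_{X}(X)$ to reduce --- after a further iterate --- to a fibration of $X$ over a positive-dimensional variety of general type on which $f$ acts as the identity, a configuration incompatible with $f$ being amplified.
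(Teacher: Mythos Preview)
Your Appell--Humbert argument for the abelian case is correct and clean: if $F$ were amplified on an abelian variety, the Hermitian form of the witnessing line bundle would satisfy $\mathcal H(F_0v,F_0v)>\mathcal H(v,v)$ for all $v\neq 0$, ruling out an $F_0$-fixed tangent vector. But the step you flag as the ``main obstacle''---transferring amplifiedness from $f$ to the induced map $F$ on $\Alb(X)$---is a genuine gap, not a technicality. That implication is exactly Question~\ref{2ndQ} in the paper, which is left \emph{open}; the paper proves Theorem~\ref{EndoThm} without it. Your proposed routes do not close the gap: the assertion that amplified forces $\kappa(X)\le 0$ is not what Fakhruddin proves (his argument is for polarized maps), and no independent proof is supplied; the Hodge--Riemann suggestion is not developed; and the alternative via Theorem~\ref{MainThm} and Kobayashi--Ochiai only produces a positive-dimensional general-type base when $a_X(X)\neq\Alb(X)$, leaving the surjective-Albanese case untreated.

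The paper avoids descending to $\Alb(X)$ altogether and instead goes \emph{upward}. From the root-of-unity eigenvalue one obtains, via Proposition~\ref{MainProp}, a positive-dimensional abelian subvariety $T\subset\Alb(X)$ pointwise fixed by an iterate of $F$. One then passes to the product $X^{\times 2M}$ with the map $\alpha_M(x_1,\dots,x_{2M})=\sum_{i\le M}a_X(x_i)-\sum_{i>M}a_X(x_i)$, taking $M$ large enough that $T\subset\alpha_M(X^{\times 2M})$ (possible because $a_X(X)$ generates $\Alb(X)$). The coordinatewise map $f_{2M}$ is amplified on the product via the box line bundle, and it preserves each fiber $\alpha_M^{-1}(t)$ for $t\in T$. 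By Proposition~\ref{Iterates} and Theorem~\ref{Fak} each of these uncountably many fibers carries a Zariski-dense set of periodic points; a pigeonhole on periods then forces a positive-dimensional subvariety pointwise fixed by some $f_{2M}^N$, contradicting Proposition~\ref{Iterates}(3). This product-and-pigeonhole trick is the missing idea: one never needs $F$ to be amplified, only $f_{2M}$ on a product large enough to see $T$ through fibers.
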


Theorem \ref{EndoThm} yields the following implication diagram for a surjective endomorphism of a smooth complex projective variety whose Albanese map is non-trivial:
\vspace{.05in}
\[
\text{polarized} \implies \text{amplified} \implies \text{unity-free} \implies \text{infinite-order}.
\]

None of the reverse implications in the diagram hold in general; however, we show that every unity-free endomorphism of an abelian surface is amplified, and we speculate that the same may be true on any abelian variety. (See \S \ref{RevImp} below.) We observe that the failure of an endomorphism to be amplified indicates that the endomorphism must fix the numerical equivalence class of some line bundle - and so is similar to the failure of an endomorphism to be unity-free. (See \S \ref{BasicProps} below.)

The conclusion of Corollary \ref{PolEndCor} for amplified endomorphisms is new and provides a constraint on the types of varieties that should constitute a natural arena for a dynamical Manin-Mumford conjecture. We observe that the set of varieties admitting amplified endomorphisms is strictly larger than the set admitting polarized endomorphisms; for example, a K3 surface may admit an amplified endomorphism but can never admit a polarized endomorphism. (See \S \ref{RevImp} below.)

While an infinite-order endomorphism of a projective variety \(X\) need not induce an infinite-order endomorphism of \(\Alb(X)\), every unity-free endomorphism of a projective variety \(X\) does induce a unity-free endomorphism of \(\Alb(X)\). To the end of better understanding the relationships between polarized, amplified, and unity-free endomorphisms in general, we ask:

\begin{question}\label{2ndQ}
Does an amplified (resp. polarized) endomorphism of a projective variety \(X\) with $\Alb(X) \neq \{0\}$ induce an amplified (resp. polarized) endomorphism of \(\Alb(X)\)?
\end{question}

Note that Question \ref{2ndQ} has an affirmitive answer for amplified endomorphisms if it is true that every unity-free endomorphism of an abelian variety is amplified. 
\\
\\
\noindent The remainder of this paper is organized as follows: in \S 2, we prove Theorem \ref{MainThm} and Corollary \ref{PinRoeCor}, and we give a useful characterization of unity-free endomorphisms of abelian varieties; in \S 3, we prove Corollary \ref{PerSubCor} and Corollary \ref{PolEndCor}, we present results about varieties admitting polarized endomorphisms, and we discuss the notion of a dynamical Manin-Mumford problem; in \S 4, we present a variety of facts about polarized, amplified, and unity-free endomorphisms and the connections between them, and we prove Theorem \ref{EndoThm}.
\\
\\
\noindent \textbf{Acknowledgements.} We thank Mihnea Popa, Laura DeMarco, and Ramin Takloo-Bighash for several useful discussions in the early stages of writing this paper.  
 
\section{Invariant Subvarieties for Endomorphisms of Abelian Varieties}

We say that a subvariety of an abelian variety is an abelian subvariety if it is a reduced and irreducible group subvariety. Given a complex abelian variety \(A\) and a reduced and irreducible subvariety \(V \subseteq A\), the stabilizer of \(V\) in \(A\) is the reduced (but not necessarily irreducible) group subvariety
\[\Stab_A(V) = \{a \in A \mid a + V = V\} \subseteq A.\]
The connected component of \(\Stab_A(V)\) containing the identity is an abelian subvariety \(\Stab^0_A(V) \subseteq A\). (See, e.g., \cite{Hin}.) Given an endomorphism \(f:A \rightarrow A\), there is a homomorphism \(\phi_f:A \rightarrow A\) (which is an isogeny if \(f\) is surjective) and an element \(\tau_f \in A\) such that
\[f(a)=\phi_f(a)+\tau_f\]
for every \(a \in A\). (See, e.g., \cite{Mum}, \S II.) If \(f(V)=V\), then \(\phi_f(\Stab^0_A(V)) \subseteq \Stab^0_A(V)\) (with equality if \(f\) is surjective).

For a complex projective variety \(X\), we let \(\kappa(X)\) denote the Kodaira dimension of any smooth birational model of \(X\) and we say that \(X\) (whether it is smooth or not) is a variety of general type if \(\kappa(X)=\dim(X)\). (Compare, e.g., \cite{Uen2} and \cite{Kob}, \S 7.)

\subsection{Characterization of Subvarieties of Abelian Varieties}\label{CharSubvars}

\begin{theorem}\label{UenoPlus}
Let \(V\) be a reduced and irreducible subvariety of a complex abelian variety \(A\). Then \(V=\Stab^0_A(V)+W\) for some reduced and irreducible subvariety \(W \subseteq V\) with \(\kappa(W)=\dim(W)=\kappa(V)\).
\end{theorem}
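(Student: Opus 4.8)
The plan is to reduce the statement essentially to Ueno's fibration theorem and then to promote the quotient variety to a multisection sitting inside $V$. First I would set $B := \Stab^0_A(V)$ and let $\pi : A \to A/B$ be the quotient homomorphism, with $V' := \pi(V) \subseteq A/B$. Since $B \subseteq \Stab_A(V)$, the subvariety $V$ is invariant under translation by $B$, so $V = \pi^{-1}(V')$. By Ueno \cite{Uen1}, the maximal abelian subvariety of $A/B$ translating $V'$ into itself is trivial (otherwise its preimage in $A$ would be an abelian subvariety strictly containing $B$ and stabilizing $V$, contradicting the maximality of $\Stab^0_A(V)$), the variety $V'$ is of general type, i.e.\ $\kappa(V') = \dim(V')$, and $\pi$ exhibits $V$ as a fibre bundle over $V'$ with fibre $B$; in particular $\kappa(V) = \kappa(V') = \dim(V')$.

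It then remains to produce a reduced and irreducible subvariety $W \subseteq V$ with $\pi(W) = V'$ and $\dim(W) = \dim(V')$. For this I would invoke Poincar\'e complete reducibility \cite{Mum} to pick an abelian subvariety $C \subseteq A$ with $B + C = A$ and $B \cap C$ finite, so that $\rho := \pi|_C : C \to A/B$ is an isogeny. Let $W$ be any irreducible component, given its reduced structure, of $\rho^{-1}(V') \subseteq C$. Since $\rho$ is finite and flat, $\rho|_W : W \to V'$ is finite and surjective, so $\dim(W) = \dim(V') = \kappa(V)$; and since $\pi(W) = \rho(W) = V' = \pi(V)$ while $W \subseteq C \subseteq A$, we get $W \subseteq \pi^{-1}(V') = V$.

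To conclude: $B + W$ is a closed, $B$-invariant subset of $V$ with $\pi(B+W) = V'$, hence $B + W = \pi^{-1}(V') = V$, which is the desired decomposition $V = \Stab^0_A(V) + W$. Finally, passing to smooth birational models and resolving $W \dashrightarrow V'$ to a generically finite surjective morphism of smooth projective varieties, one has $\kappa(W) \ge \kappa(V')$ (pluricanonical forms pull back, and the ramification divisor only increases the relevant dimension), so $\dim(W) \ge \kappa(W) \ge \kappa(V') = \dim(V') = \dim(W)$, forcing $\kappa(W) = \dim(W) = \kappa(V)$.

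The only content beyond citing Ueno's theorem is the realization of $W$ as a subvariety of $V$ itself rather than merely of $A/B$, and I expect the main point to be careful about is that $W$ will in general be a genuine multisection of the bundle $V \to V'$, not a section; this is harmless, since a generically finite dominant cover of a variety of general type is again of general type and since $B + W = V$ holds for any $W \subseteq V$ dominating $V'$. A secondary bookkeeping issue is that $V$, $V'$, and $W$ may all be singular, so every occurrence of $\kappa$ must be read on a smooth birational model; the fibre-bundle structure supplied by Ueno's theorem is precisely what makes $\kappa(V) = \kappa(V')$ in that generality.
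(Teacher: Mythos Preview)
Your proposal is correct and follows essentially the same route as the paper: Ueno's fibration to identify the quotient $V'=\pi(V)$ as a variety of general type with $\kappa(V)=\dim(V')$, Poincar\'e reducibility to obtain a complementary abelian subvariety $C$ (the paper's $T$), and then $W$ taken as an irreducible component of $V\cap C$ mapping finitely onto $V'$. The only organizational difference is that the paper starts from the abelian subvariety $B'$ supplied by Ueno's theorem and proves $B'=\Stab^0_A(V)$ via a dimension count, whereas you start from $B=\Stab^0_A(V)$, argue directly that $\Stab^0_{A/B}(V')=0$, and then apply Ueno to $V'$; the two arguments are dual and equally valid.
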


\begin{proof}
Set \(B = \Stab^0_A(V)\). By the proof of Theorem 3.10 in \cite{Uen1}, there is an abelian subvariety \(B' \subseteq A\) such that the quotient map \(q':A \rightarrow A/B'\) gives \(V\) the structure of a fiber bundle whose base \(q'(V)\) has \(\kappa(q'(V))=\dim(q'(V))\) and whose fibers are isomorphic to \(B'\). Since each fiber of \(q'\) is invariant under the action of \(B'\), we have \(B' \subseteq B\). The quotient map \(q:A \rightarrow A/B\) satisfies \(q^{-1}(q(V))=V\), and hence also gives \(V\) the structure of a fiber bundle (over \(q(V)\) with fibers isomorphic to \(B\)). Since \(\Stab^0_{A/B'}(q'(V))=q'(B)\), \(q'(V)\) is a fiber bundle over \(q(V)\) with fibers isomorphic to \(q'(B)\). Thus we must have
\[\dim(q'(V)) \geq \dim(q(V)) \geq \kappa(q'(V)) = \dim(q'(V)).\]
(See, e.g., \cite{Uen2}.) So \(\dim(q'(B))=0\), \(B'=B\), and \(q'=q\).

By the Poincar\'e Reducibility Theorem, there is an abelian subvariety \(T \subseteq A\) such that \(A=B+T\) and the addition map from \(B \times T\) to \(A\) is an isogeny. (See, e.g., \cite{Hin}.) So the restriction \(q:T \rightarrow A/B\) is an isogeny, and we can choose an irreducible component \(W \subseteq V \cap T\) so that \(q:W \rightarrow q(V)\) is a finite surjective morphism. Then
\[\dim(q(V))=\dim(W) \geq \kappa(W) \geq \kappa(q(V)) = \dim(q(V)).\]
(See, e.g., \cite{Uen2}.) Since \(q(W) = q(V)\), every \(w \in W\) can be written as \(w=b+v\) for some \(b \in B\) and \(v \in V\); likewise, every \(v \in V\) can be written as \(v=b+w\) for some \(b \in B\) and \(w \in W\). So \(V = B + W\).
\end{proof}

\subsection{Dominant Maps to Varieties of General Type}\label{DomRatMaps}

\begin{theorem}[\cite{KobOch}, Theorem 1]\label{KobOch}
Suppose that \(Y\) is a complex projective variety of general type and that \(X\) is a complex projective variety. Then there are at most finitely many dominant rational maps from \(X\) to \(Y\).
\end{theorem}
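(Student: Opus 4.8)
This is \cite{KobOch}, Theorem~1; here I outline the approach I would take. The plan is to linearize the problem by means of pluricanonical forms and then to invoke a rigidity property of varieties of general type. First I would reduce to the situation in which \(X\) and \(Y\) are smooth projective of the same dimension \(n\): resolving singularities affects neither the hypothesis on \(Y\) nor the conclusion for \(X\) (under the evident identification of rational maps), and if \(\dim X > \dim Y\) one restricts to a general complete intersection \(X' \subseteq X\) of dimension \(n\), on which a dominant rational map remains dominant (indeed generically finite) and on which distinct dominant maps remain distinct, since the locus on which two distinct rational maps agree has codimension at least one; I omit the details of this reduction, which in any event is not needed for the application in this paper, where \(X=Y\).

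Since \(Y\) is of general type, \(K_Y\) is big, so fix \(m\geq 1\) for which the \(m\)-th pluricanonical map \(\Phi:=\Phi_{|mK_Y|}\colon Y\dashrightarrow\mathbb{P}^N\) is birational onto its image \(Y_m\). To each dominant rational map \(f\colon X\dashrightarrow Y\) associate the linear map \(f^*\colon H^0(Y,\mathcal{O}(mK_Y))\to H^0(X,\mathcal{O}(mK_X))\) obtained by resolving \(f\) to a morphism \(g\colon X'\to Y\) on a smooth birational model of \(X\), pulling back a pluricanonical form, and pushing it down to \(X\); this is well defined, with values in \(H^0(X,\mathcal{O}(mK_X))\), because \(K_{X'}=g^*K_Y+R\) with \(R\geq 0\) the ramification divisor of the generically finite map \(g\), and it is injective because \(f\) is dominant. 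The crucial, elementary point is that \(f\) is recovered from the class \([f^*]\in\mathbb{P}\big(\mathrm{Hom}(H^0(Y,\mathcal{O}(mK_Y)),H^0(X,\mathcal{O}(mK_X)))\big)\): the composite \(\Phi\circ f\colon X\dashrightarrow\mathbb{P}^N\) is the rational map attached to the linear subsystem \(f^*|mK_Y|\) of \(|mK_X|\), hence is determined by \([f^*]\) together with the fixed pluricanonical map of \(X\), and then \(f=\Phi^{-1}\circ(\Phi\circ f)\) since \(\Phi\) is birational. Thus \(f\mapsto[f^*]\) injects the set of dominant rational maps \(X\dashrightarrow Y\) into a fixed projective space, and it is algebraic on the scheme parametrizing such maps.

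It therefore suffices to show that the image of \(f\mapsto[f^*]\) is finite, and two ingredients give this. First, a degree bound: for dominant \(f\), with \(g\) as above, \(\deg(f)\cdot\operatorname{vol}(K_Y)=\operatorname{vol}(g^*K_Y)\leq\operatorname{vol}(K_{X'})=\operatorname{vol}(K_X)\), and \(\operatorname{vol}(K_Y)>0\) because \(Y\) is of general type; hence \(\deg f\) is bounded in terms of \(X\) and \(Y\) alone (in particular \(X\) is itself of general type, and when \(X=Y\) every dominant rational self-map of \(Y\) is birational). Consequently the graphs of these maps have bounded degree and lie in finitely many components of a Chow variety, so the image of \(f\mapsto[f^*]\) is constructible, and it is enough to exclude positive-dimensional families. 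This last step is the real obstacle, and it is where the general-type hypothesis is used in full strength: a projective variety of general type is measure hyperbolic, and the resulting Schwarz-type volume-decreasing estimate for the Kobayashi--Eisenman intrinsic measure forbids a nonconstant holomorphic family of dominant maps onto \(Y\). (When \(X=Y\) one can argue more directly: a positive-dimensional family of birational self-maps of \(Y\) would yield a positive-dimensional subgroup of the automorphism group of the canonical model of \(Y\), which is finite.) Granting this, \(f\mapsto[f^*]\) has finite image, and hence there are only finitely many dominant rational maps from \(X\) to \(Y\).
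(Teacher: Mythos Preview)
The paper does not prove this theorem at all: it is quoted verbatim from Kobayashi--Ochiai \cite{KobOch} and used as a black box in the proof of Theorem~\ref{MainThm}. There is therefore no ``paper's own proof'' to compare your sketch against.

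That said, your outline is a faithful summary of the Kobayashi--Ochiai strategy (pull back pluricanonical forms to embed the set of dominant maps into a fixed projective space, bound degrees via bigness of \(K_Y\), then use measure-hyperbolicity to exclude positive-dimensional families), and the parenthetical shortcut you give for the case \(X=Y\) --- reducing to finiteness of the automorphism group of the canonical model --- is exactly what is needed for the application here. One minor point: your reduction to \(\dim X=\dim Y\) by cutting with a general complete intersection is slightly delicate, since one must ensure that infinitely many distinct dominant maps remain distinct after restriction to a \emph{single} such slice; this requires a countability/Baire-type argument rather than just ``codimension at least one'' for each pair, but it is standard and, as you note, unnecessary for the present paper.
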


Suppose that \(f\) is a surjective endomorphism of a complex abelian variety \(A\) and that \(B \subseteq A\) is an abelian subvariety satisfying \(\phi_f(B)=B\). Then \(\phi_f\) certainly descends to a surjective endomorphism of \(A/B\) and, since \(f(a+b) = \phi_f(b) + f(a)\) for every \(a \in A\) and \(b \in B\), \(f\) also descends to a surjective endomorphism of \(A/B\).

\begin{proof}[Proof of Theorem \ref{MainThm}]
Set \(B = \Stab^0_A(V)\), write \(V=B+W\) as in Theorem \ref{UenoPlus}, let \(q:A \rightarrow A/B\) be the quotient map, and let \(f_B:A/B \rightarrow A/B\) be the quotient of \(f\). Then \(\dim(q(V))=\kappa(q(V))\) and \(f_B\) is a surjective endomorphism of \(q(V)\). Thus, by Theorem \ref{KobOch}, some iterate \(f_B^k\) is the identity map on \(q(V)\). So for any \(w \in W\), \(q(f^k(y))=f_B^k(q(w))=q(w)\) and \(f^k(B+w)=B+w\).
\end{proof}

\begin{remark}
It can be impossible to choose \(W\) in Theorem \ref{MainThm} such that \(f(W)=W\) (even when \(f\) is an isogeny): let \(A\) be an abelian surface, let \(C\) be a smooth hyperbolic curve in \(A\), and let \(f:A \times A \rightarrow A \times A\) be given by \(f(a_1,a_2)=(a_1+a_2,a_2)\); then \(f(A \times C)=A \times C\), but no subset of \(A \times C\) that maps finitely onto \(C\) under the second projection of \(A \times A\) can be preserved by \(f\).
\end{remark}

\subsection{Eigenvalues for Actions on First Cohomology Groups}\label{Eigenvals}\hspace*{\fill}\vspace{6pt}

Given a complex abelian variety \(A\) of dimension \(n\), we can write \(A=\mathbb{C}^n/\Lambda\) for some rank-\(2n\) lattice \(\Lambda \subseteq \mathbb{C}^n\). Then the set \(\{dz_1,\dots,dz_n\}\) of holomorphic 1-forms on \(\mathbb{C}^n\) descends to a basis for \(H^{1,0}(A)\). Since any translation on \(A\) induces a trivial cohomological action, any surjective endomorphism \(f:A \rightarrow A\) satisfies \(f^*=\phi_f^*\) on every cohomology group of \(A\). Moreover, given such an endomorphism, there is some \(\Phi_f \in \GL_n(\mathbb{C})\) satisfying \(\Phi_f(\Lambda)=\Lambda\) such that \(\phi_f\) is the quotient of \(\Phi_f\)--so that \(\phi_f^*=\Phi_f^T\) on \(H^{1,0}(A)\).

\begin{lemma}\label{EigenLemma}
Let \(A\) be a complex abelian variety of dimension \(n\) with an isogeny \(\phi:A \rightarrow A\), and let \(\Gamma = \{\gamma_1,\dots,\gamma_n\}\) be the multiset of eigenvalues of \(\phi^*\) on \(H^{1,0}(A)\). Suppose that \(B \subseteq A\) is an abelian subvariety of dimension \(m\) satisfying \(\phi(B)=B\), and let \(\Delta = \{\delta_1,\dots,\delta_m\}\) be the multiset of eigenvalues of \(\phi^*\) on \(H^{1,0}(B)\). Then \(\Delta\) is a submultiset of \(\Gamma\) and the multiset of eigenvalues of \(\phi_B^*\) on \(H^{1,0}(A/B)\) is \(\Gamma - \Delta\), where \(\phi_B:A/B \rightarrow A/B\) is the quotient of \(\phi\).
\end{lemma}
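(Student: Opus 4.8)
The plan is to route everything through the short exact sequence of abelian varieties
\[
0 \longrightarrow B \longrightarrow A \stackrel{q}{\longrightarrow} A/B \longrightarrow 0
\]
and to exploit the compatibility of $\phi$ with each map in it. First I would record the induced short exact sequence on holomorphic $1$-forms. Dualizing the exact sequence of tangent spaces at the identity $0 \to T_0 B \to T_0 A \to T_0(A/B) \to 0$ — equivalently, working in coordinates $z_1,\dots,z_n$ on $\mathbb{C}^n$ chosen as at the start of \S\ref{Eigenvals} but adapted so that $B$ corresponds to the first $m$ coordinate directions — one obtains
\[
0 \longrightarrow q^* H^{1,0}(A/B) \longrightarrow H^{1,0}(A) \stackrel{i^*}{\longrightarrow} H^{1,0}(B) \longrightarrow 0,
\]
where $i:B \hookrightarrow A$ is the inclusion. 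Concretely, $q^* H^{1,0}(A/B) = \langle dz_{m+1},\dots,dz_n\rangle$, the map $i^*$ is restriction of forms, and $i^*(dz_1),\dots,i^*(dz_m)$ is a basis of $H^{1,0}(B)$; this makes exactness (in particular surjectivity of $i^*$) transparent.

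Next I would check that the filtration $q^* H^{1,0}(A/B) \subseteq H^{1,0}(A)$ is preserved by $\phi^*$. Since $\phi(B)=B$ and $B$ is an abelian subvariety, $\phi$ is a homomorphism carrying $B$ onto $B$, so $\phi \circ i = i \circ \phi|_B$ and $\phi$ descends to an isogeny $\phi_B$ of $A/B$ with $q \circ \phi = \phi_B \circ q$. Pulling back forms, $\phi^* \circ q^* = q^* \circ \phi_B^*$ shows that $\phi^*$ carries $q^* H^{1,0}(A/B)$ onto itself (onto, because $\phi_B^*$ is invertible) and acts there as $\phi_B^*|_{H^{1,0}(A/B)}$; and $i^* \circ \phi^* = (\phi|_B)^* \circ i^*$ shows that the action of $\phi^*$ induced on the quotient $H^{1,0}(A)/q^* H^{1,0}(A/B) \cong H^{1,0}(B)$ is exactly $\phi^*|_{H^{1,0}(B)}$.

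Finally, in a basis adapted to the filtration, the matrix of $\phi^*|_{H^{1,0}(A)}$ is block upper-triangular with diagonal blocks representing $\phi_B^*|_{H^{1,0}(A/B)}$ and $\phi^*|_{H^{1,0}(B)}$. Hence the characteristic polynomial of $\phi^*|_{H^{1,0}(A)}$ is the product of those of $\phi_B^*|_{H^{1,0}(A/B)}$ and $\phi^*|_{H^{1,0}(B)}$, which says precisely that $\Gamma$ is the disjoint union (as multisets) of $\Delta$ and the eigenvalue multiset of $\phi_B^*|_{H^{1,0}(A/B)}$; equivalently $\Delta$ is a submultiset of $\Gamma$ and the eigenvalues of $\phi_B^*$ on $H^{1,0}(A/B)$ form $\Gamma - \Delta$.

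I do not expect a serious obstacle: this is a functoriality argument. The only points requiring care are the exactness of the $1$-form sequence with $H^{1,0}(A/B)$ as the sub (handled cleanly by the adapted coordinates) and the bookkeeping that $\phi^*$ acts as $\phi_B^*$ on the \emph{subspace} rather than on the quotient. I would also note at the outset that $\phi(B)=B$ forces $\phi|_B$ to be an isogeny of $B$ and $\phi_B$ to be an isogeny of $A/B$, so that $\Delta$ and the eigenvalue multiset of $\phi_B^*$ genuinely have $m$ and $n-m$ elements, matching $|\Gamma| = n$.
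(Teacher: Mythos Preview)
Your argument is correct and essentially the same as the paper's: both identify an invariant filtration and read off the eigenvalue splitting from a block upper-triangular matrix in an adapted basis. The only cosmetic difference is dual bookkeeping---you work directly with $\phi^*$ on $H^{1,0}(A)$, taking $q^*H^{1,0}(A/B)$ as the invariant subspace, whereas the paper works with the lifted linear map $\Phi$ on $\mathbb{C}^n$, taking $V_B=\pi^{-1}(B)$ as the invariant subspace.
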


\begin{proof}
Write \(A = \mathbb{C}^n/\Lambda\)--so that \(\phi\) is the quotient of some \(\Phi \in \GL_n(\mathbb{C})\) satisfying \(\Phi(\Lambda)=\Lambda\). Let \(\pi:\mathbb{C}^n \rightarrow A\) be the quotient map, and let \(V_B = \pi^{-1}(B) \subseteq \mathbb{C}^n\). So \(V_B\) is an \(m\)-dimensional subspace, \(\Lambda_B = V_B \cap \Lambda \subseteq \Lambda\) is a sublattice of rank \(2m\), and \(\Lambda/\Lambda_B\) is a lattice of rank \(2(n-m)\). Let \(q\) be the quotient map from \(A\) to \(A/B\), let \(\rho\) be the quotient map from \(\mathbb{C}^n\) to \(\mathbb{C}^n/V_B\), and let \(\pi'\) be the quotient map from \(\mathbb{C}^n/V_B\) to \((\mathbb{C}^n/V_B)/(\Lambda/\Lambda_B)\). Then \(q \circ \pi\) and \(\pi' \circ \rho\) have the same kernel (i.e., \(V_B \oplus \Lambda\))--so that \((\mathbb{C}^n/V_B)/(\Lambda/\Lambda_B) = A/B\). Now \(\Gamma\) is the multiset of eigenvalues of \(\Phi\), \(V_B\) is \(\Phi\)-invariant, and \(\Delta\) is the multiset of eigenvalues of \(\Phi|_{V_B}\). Moreover, \(\Phi\) descends to a map \(\Phi_B \in \operatorname{GL}(\mathbb{C}^n/V_B)\) and the multiset of eigenvalues of \(\phi_B^*\) on \(H^{1,0}(A/B)\) is the multiset of eigenvalues of \(\Phi_B\).

Let \(\{x_1,\dots,x_m\}\) be a basis for \(V_B\) that gives \(\Phi|_{V_B}\) in Jordan canonical form, and let \(\{y_1,\dots,y_{n-m}\}\) be a subset of \(\mathbb{C}^n\) whose image under \(\rho\) is basis for \(\mathbb{C}^n/V_B\) that gives \(\Phi_B\) in Jordan canonical form. Then \(\{x_1,\dots,x_m,y_1,\dots,y_{n-m}\}\) is a basis for \(\mathbb{C}^n\) with respect to which \(\Phi\) is upper triangular. It follows that the multiset of eigenvalues of \(\Phi_B\) is \(\Gamma-\Delta\).
\end{proof}

Suppose that \(f\) is a surjective endomorphism of a complex abelian variety \(A\) satisfying \(f(\sigma)=\sigma\) for some \(\sigma \in A\). Then
\[\phi_f(a) = f(a + \sigma) - \sigma\]
for every \(a \in A\)--so that \(f\) is conjugate (by a translation) to an isogeny. Moreover, \(\phi_f(V-\sigma)=V-\sigma\) for any \(V \subseteq A\) satisfying \(f(V)=V\).

\begin{proof}[Proof of Corollary \ref{PinRoeCor}]
Set \(B = \Stab^0_A(V)\), write \(V=B+W\) as in Theorem \ref{MainThm}, let \(q:A \rightarrow A/B\) be the quotient map, and let \(f_B:A/B \rightarrow A/B\) be the quotient of \(f\). Then \(f_B^k\) is the identity map on \(q(W)\) and hence, in particular, fixes some point \(\sigma \in A/B\). Thus \(\phi_{f_B^k}\) is the identity map on \(q(W)-\sigma\), and hence also on the abelian subvariety \(T \subseteq A/B\) generated by \(q(W)-\sigma\). (See, e.g., \cite{Deb}, \S 8.) It then follows from Lemma \ref{EigenLemma} that the number of eigenvalues (counting multiplicity) of \((f^k)^*\) on \(H^{1,0}(A)\) that are equal to one is at least \(\dim(T)\), which is at least \(\dim(q(W))=\kappa(V)\). Moreover, if \(\dim(q(W)) \neq 0\), then \(\dim(T)\) must be strictly larger than \(\dim(q(W))\). The proof is concluded by the observation that the multiset of eigenvalues of \((f^k)^*\) on \(H^{1,0}(A)\) is exactly the set of all \(k\)-th powers of elements in the multiset of eigenvalues of \(f^*\) on \(H^{1,0}(A)\).
\end{proof}

\subsection{Unity-Free Endomorphisms of Abelian Varieties}\label{UnityFree1}\hspace*{\fill}\vspace{6pt}

The cohomology ring \(H^*(A,\mathbb{Z})\) of any complex abelian variety \(A\) is generated (via the cup product) by \(H^1(A,\mathbb{Z})\). If \(f\) is a surjective endomorphism of a complex abelian variety \(A\), then the pull-back action \(f^*\) respects the cup product--so that, in particular, the Lefschetz number for \(f\) is
\[\sum_{0 \leq i \leq 2\dim(A)}(-1)^i \Tr(f^*:H^i(A,\mathbb{Z}) \rightarrow H^i(A,\mathbb{Z})) = \prod_{1 \leq j \leq \dim(A)}(1-\gamma_j)(1-\overline{\gamma_j}),\]
where \(\{\gamma_1,\dots,\gamma_{\dim(A)}\}\) is the multiset of eigenvalues of \(f^*\) on \(H^{1,0}(A)\); it then follows from the Lefschetz Fixed-Point Theorem that \(f\) has a fixed point if it is unity-free. (Compare \cite{Zha}, \S 2.1.) Thus any unity-free surjective endomorphism of a complex abelian variety can, without loss of generality, be viewed as an isogeny.

Given a complex abelian variety \(A\), we let \(\Tors(A)\) denote the set of torsion points on \(A\). Given an endomorphism \(f\) of a complex projective variety \(X\), we let \(\Preper(f)\) denote the set of points on \(X\) that are preperiodic for \(f\). If \(f:A \rightarrow A\) is an isogeny of a complex abelian variety \(A\), then \(\Tors(A) \subseteq \Preper(f)\): for any \(m \in \mathbb{N}\), \(\{a \in A \mid ma=0\}\) is finite and preserved by \(f\). The following result gives a useful characterization of unity-free endomorphisms of abelian varieties.

\begin{proposition}\label{MainProp}
Let \(f:A \rightarrow A\) be an isogeny of a complex abelian variety \(A\). Then the following three conditions are equivalent:
\begin{itemize}
\item[1)] \(\Preper(f) \neq \Tors(A)\); 
\item[2)] there is a positive-dimensional abelian subvariety of \(A\) that is pointwise fixed by some iterate \(f^k\); and 
\item[3)] there is an eigenvalue of \(f^*\) on \(H^{1,0}(A)\) that is a root of unity.
\end{itemize}
\end{proposition}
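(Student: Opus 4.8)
The plan is to prove the cyclic chain of implications $(3) \Rightarrow (2) \Rightarrow (1) \Rightarrow (3)$, which gives the stated equivalence. Since $f$ is an isogeny it is a homomorphism, so $f = \phi_f$; write $A = \mathbb{C}^n/\Lambda$ and let $\Phi \in \GL_n(\mathbb{C})$ with $\Phi(\Lambda) \subseteq \Lambda$ be the lift of $f$. As recorded in \S\ref{Eigenvals}, the multiset of eigenvalues of $\Phi$ is exactly the multiset $\{\gamma_1,\dots,\gamma_n\}$ of eigenvalues of $f^*$ on $H^{1,0}(A)$, and the lift of $f^k$ is $\Phi^k$, with eigenvalue multiset $\{\gamma_1^k,\dots,\gamma_n^k\}$. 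The common device behind all three implications is the homomorphism $f^k - \mathrm{id}\colon A \to A$, whose differential at the origin is $\Phi^k - I$: it is an isogeny (equivalently, has finite kernel) precisely when no $\gamma_j$ is a $k$-th root of unity, and when this fails the identity component $B$ of $\ker(f^k - \mathrm{id})$ is a positive-dimensional abelian subvariety on which $f^k$ acts as the identity.

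For $(3) \Rightarrow (2)$: if some $\gamma_j$ is a root of unity, choose $k$ with $\gamma_j^k = 1$; then $1$ is an eigenvalue of $\Phi^k$, so $f^k - \mathrm{id}$ is not an isogeny, and the identity component of its kernel is a positive-dimensional abelian subvariety fixed pointwise by $f^k$. For $(2) \Rightarrow (1)$: a positive-dimensional abelian subvariety $B$ is uncountable while $\Tors(A)$ is countable, so $B$ contains a non-torsion point; if some $f^k$ fixes $B$ pointwise then every point of $B$ is periodic, so this non-torsion point lies in $\Preper(f)$ and $\Preper(f) \neq \Tors(A)$.

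For $(1) \Rightarrow (3)$ I would argue the contrapositive: assuming no $\gamma_j$ is a root of unity, I show $\Preper(f) \subseteq \Tors(A)$ (the reverse inclusion always holds). Given $a \in \Preper(f)$, there are $0 \le i < j$ with $f^i(a) = f^j(a)$, so $f^{j-i}(a) - a \in \ker(f^i)$, which is finite because $f$ is an isogeny; hence $(f^{j-i} - \mathrm{id})(Na) = 0$ for some positive integer $N$. The eigenvalues $\gamma_l^{j-i} - 1$ of $\Phi^{j-i} - I$ are all nonzero by hypothesis, so $f^{j-i} - \mathrm{id}$ is an isogeny with finite kernel, whence $Na$, and therefore $a$, is torsion.

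I expect the step requiring the most care to be $(3) \Rightarrow (2)$: converting the purely cohomological hypothesis — a root-of-unity eigenvalue of $f^* |_{H^{1,0}(A)}$ — into a genuine positive-dimensional abelian subvariety pointwise fixed by an iterate. The point is to pass to a power $f^k$ killing the offending eigenvalue and to detect that $\ker(f^k - \mathrm{id})$ is infinite (hence positive-dimensional) from the singularity of its analytic representation $\Phi^k - I$; that the identity component of this kernel is then an abelian subvariety is standard structure theory, and it is fixed pointwise by construction. The other two implications are short once this mechanism is in hand.
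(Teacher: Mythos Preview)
Your proof is correct, but it runs the cycle in the opposite direction from the paper: you prove $(3)\Rightarrow(2)\Rightarrow(1)\Rightarrow(3)$, whereas the paper proves $(1)\Rightarrow(2)\Rightarrow(3)\Rightarrow(1)$. The organizing idea is also different. You use a single device throughout --- the endomorphism $f^k-\mathrm{id}$ together with the fact that its differential $\Phi^k-I$ is invertible iff no $\gamma_j^k=1$ --- so that $(3)\Rightarrow(2)$ and the contrapositive of $(1)\Rightarrow(3)$ both reduce to the dichotomy ``singular differential $\Leftrightarrow$ positive-dimensional kernel.'' The paper, by contrast, obtains $(2)\Rightarrow(3)$ from Lemma~\ref{EigenLemma}, and for $(3)\Rightarrow(1)$ passes to the real representation of $\Phi^k$ on $\mathbb{R}^{2n}$ to find a $1$-eigenvector rational with respect to $\Lambda$ and hence a pointwise-fixed real subtorus. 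Your route is shorter and avoids both the appeal to Lemma~\ref{EigenLemma} and the explicit real-coordinate computation; the paper's argument for $(3)\Rightarrow(1)$ yields the slightly finer byproduct that the fixed subtorus arises from a rational subspace of $\operatorname{Span}_{\mathbb{Q}}(\Lambda)$, but this is not used elsewhere.
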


\begin{proof}
(1 $\Rightarrow$ 2) If ${\rm Preper}(f) \ne {\rm Tors}(A)$, then there is a nontorsion point $P \in A$ satisfying a preperiodic relation; i.e., there exists $m > n \in \mathbb{N}_0$ such that $f^m(P) = f^n(P)$. Since $f$ is an isogeny, any iterate $f^k(P)$ is also nontorsion; so, without loss of generality, we can take $P$ to be periodic and set $n=0$. Since $\{a \in A : f^k(a) = a\}$ is a group subvariety of $A$ containing $mP$ for any $m \in \mathbb{Z}$, it must contain some positive-dimensional abelian subvariety of $A$ that is pointwise fixed by $f^k$. 

(2 $\Rightarrow$ 3) If $f^k$ pointwise fixes some positive-dimensional abelian subvariety $K \subseteq A$, then the eigenvalues of \((f^k)^*\) on \(H^{1,0}(K)\) must all be one. So it follows from Lemma \ref{EigenLemma} that the eigenvalues of \(f^*\) on \(H^{1,0}(A)\) must include a root of unity.

(3 $\Rightarrow$ 1) Write $A = \mathbb{C}^{\dim(A)}/\Lambda$--so that \(f\) is the quotient of some \(F \in \GL_{\dim(A)}(\mathbb{C})\) satisfying \(F(\Lambda)=\Lambda\). If some eigenvalue of $f^*$ on $H^{1,0}(A)$ is a root of unity, then 1 is an eigenvalue of some iterate $(f^*)^k$ on $H^{1,0}(A)$--and hence also of $F^k$. Let $(g_{ij})_{1 \leq i,j \leq \dim(A)}$ be a matrix representation of $F^k$ as a linear self-map on $\mathbb{C}^{\dim(A)}$. Under the natural identification of $\mathbb{C}^{\dim(A)}$ with $\mathbb{R}^{2\dim(A)}$ via
$$z_l = x_l + \imath y_l \mapsto (x_l,y_l),$$
this matrix represention of $F^k$ becomes
$$\left( \begin{array}{cc}
\Re (g_{ij}) & -\Im (g_{ij}) \\
\Im (g_{ij}) & \Re (g_{ij}) \end{array} \right)_{1 \leq i,j \leq \dim(A)}.$$
Taking $(g_{ij})_{1 \leq i,j \leq \dim(A)}$ in Jordan canonical form shows immediately that 1 is an eigenvalue of $F^k$ on $\mathbb{R}^{2\dim(A)}$--and hence also on ${\rm Span}_\mathbb{Q}(\Lambda)$. So $F^k$ pointwise fixes some non-trivial linear subspace $V \subseteq {\rm Span}_\mathbb{Q}(\Lambda)$, and $f^k$ pointwise fixes the non-trivial (real) subtorus $T \subseteq A$ corresponding to the closure of $V$ in $\mathbb{R}^{2\dim(A)}$. Thus ${\rm Preper}(F)$ contains all of $T$, including its nontorsion points.
\end{proof}

\begin{remark}\label{TorsTran}
In Theorem \ref{PinRoe}, the hypothesis that \(f\) is an isogeny actually leads to a stronger conclusion, via an application of Proposition \ref{MainProp}: set \(B=\Stab_A^0(V)\), write \(V=B+w\) as in Theorem \ref{MainThm} with \(w \in A\) a point, let \(q:A \rightarrow A/B\) be the quotient map, and let \(f_B:A/B \rightarrow A/B\) be the quotient of \(f\); then, since \(f_B\) is an isogeny and \(f_B(q(w))=q(w)\), \(q(w)\) must be an element of \(\Tors(A/B)\); thus \(w\) can be taken to be an element of \(\Tors(A)\)--so that \(V\) is in fact a torsion translate of an abelian subvariety.
\end{remark}

\section{Induced Maps on Albanese Varieties}

Given a smooth complex projective variety \(X\), we let \(\Alb(X)\) denote the Albanese variety for \(X\) and we let \(\alpha_X\) denote the Albanese map from \(X\) to \(\Alb(X)\).

\subsection{Functorial Properties of Albanese Maps}\label{AlbIntro}\hspace*{\fill}\vspace{6pt}

Any endomorphism \(f\) of a smooth complex projective variety \(X\) induces a map \(F:\Alb(X) \rightarrow \Alb(X)\) satisfying \(F \circ \alpha_X = \alpha_X \circ f\); moreover, since \(\alpha_X^*\) gives an isomorphism from \(H^{1,0}(\Alb(X))\) to \(H^{1,0}(X)\), the pull-back action \(F^*\) on \(H^{1,0}(\Alb(X))\) is conjugate to the pull-back action \(f^*\) on \(H^{1,0}(X)\). (See, e.g., \cite{Huy}, \S 3.3.) The universal property of Albanese varieties states that any morphism from \(X\) to a complex abelian variety must factor through \(\alpha_X\)--so that, in particular, \(\alpha_X(X)\) cannot be contained in a translate of a proper abelian subvariety of \(A\). (See, e.g., \cite{BHPV}, \S I.13.) So \(F\) must be surjective if \(f\) is surjective.

Suppose now that \(\alpha_X(X) \neq \Alb(X)\) and that \(f\) is surjective. Then Theorem \ref{MainThm} shows that
\[\alpha_X(X) = \Stab_{\Alb(X)}^0(\alpha_X(X)) + W\]
for some variety \(W \subseteq \Alb(X)\) of general type, and that there is an iterate \(F^k\) that satisfies
\[F^k(\Stab_{\Alb(X)}^0(\alpha_X(X)) + w)=\Stab_{\Alb(X)}^0(\alpha_X(X)) + w\] for every \(w \in W\). Moreover, the universal property of Albanese varieties implies that \(\kappa(\alpha_X(X)) > 0\), and hence also that \(\kappa(W) > 0\).

\subsection{Endomorphisms of Varieties with Non-Surjective Albanese Maps}\label{AlbCors}

\begin{proof}[Proof of Corollary \ref{PerSubCor}]
Set \(B=\Stab_{\Alb(X)}^0(\alpha_X(X))\), write \(\alpha_X(X) = B + W\) as in Theorem \ref{MainThm}, and let \(F:\Alb(X) \rightarrow \Alb(X)\) be the map induced by \(f\). Since \(\dim(W)>0\) and \(F^k(B+w)=B+w\) for any \(w \in W\), the pre-image of \(B+w\) satisfies \(\alpha_X^{-1}(B+w) \neq X\) and
\[f^k(\alpha_X^{-1}(B+w))=\alpha_X^{-1}(B+w)\]
for any \(w \in W\). If \(\dim(B)>0\) then \(\alpha_X^{-1}(B+w)\) is a proper positive-dimensional subvariety of \(X\) for any \(w \in W\).

Suppose that \(\dim(B)=0\). If it were the case that \(\dim(\alpha_X^{-1}(B+w))=0\) for some \(w \in W\), then \(\alpha_X:X \rightarrow \alpha_X(X)\) would necessarily be a generically finite map; but then \(X\) would necessarily have the same dimension as \(\alpha_X(X)\) and hence (as in the argument that \(W\) is a variety of general type in the proof of Theorem \ref{UenoPlus}) would be a variety of general type--which, by Theorem \ref{KobOch}, contradicts the assumption that \(X\) admits an infite-order surjective endomorphism. So \(\alpha_X^{-1}(B+w)\) is still a proper positive-dimensional subvariety of \(X\) for any \(w \in W\).
\end{proof}

\begin{proof}[Proof of Corollary \ref{PolEndCor}]
Since \(\kappa(\alpha_X(X))>0\), Corollary \ref{PinRoeCor} shows that no endomorphism of \(\Alb(X)\) that is induced by a surjective endomorphism of \(X\) can be unity-free. So no surjective endomorphism of \(X\) can be unity-free.
\end{proof}

\begin{theorem}[\cite{Ser}, Th\'eor\`eme 1]\label{Serre}
Suppose that \(f\) is a polarized endomorphism of a smooth complex projective variety \(X\) of dimension \(n\) and that, in particular, \(f^*L=L^{\otimes q}\) for some ample \(L \in \Pic(X)\) and \(q \in \mathbb{N}-\{1\}\). Then, for every \(j \in \{0,\dots,2n\}\), the magnitude of every eigenvalue of \(f^*\) on \(H^j(X,\mathbb{Z})\) is \(q^{j/2}\).
\end{theorem}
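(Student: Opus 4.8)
The plan is to deduce the theorem from the Hodge--Riemann bilinear relations, following Serre's Kähler analogue of the Weil conjectures. First I would record the two elementary consequences of the polarization hypothesis that drive the argument. Write \(\omega = c_1(L) \in H^2(X,\mathbb{R})\); since \(L\) is ample, \(\omega\) is a Kähler class, and \(f^*L = L^{\otimes q}\) gives \(f^*\omega = q\omega\). Intersecting \(n\) copies, \(\int_X (f^*\omega)^n = q^n \int_X \omega^n\) with \(\int_X \omega^n > 0\), so the topological degree \(d\) of \(f\) --- the integer with \(\int_X f^*\gamma = d \int_X \gamma\) for \(\gamma \in H^{2n}(X,\mathbb{Z})\) --- equals \(q^n\). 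Since \(f\) is holomorphic, \(f^*\) preserves the Hodge decomposition \(H^j(X,\mathbb{C}) = \bigoplus_{p+q=j} H^{p,q}(X)\) and commutes with complex conjugation; and from \(f^*(\omega^{n-j+1} \wedge \alpha) = q^{n-j+1}\,\omega^{n-j+1} \wedge f^*\alpha\) it follows that \(f^*\) carries \(\omega\)-primitive classes to \(\omega\)-primitive classes, hence preserves the Lefschetz decomposition and each primitive Hodge piece \(P^{p,q}(X)\).

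Next I would run the main computation in the range \(j \le n\). Fix \(p + q = j \le n\) and let \(h\) be the Hodge--Riemann Hermitian form on \(P^{p,q}(X)\), namely \(h(\alpha,\beta) = c_{p,q}\int_X \alpha \wedge \bar\beta \wedge \omega^{n-j}\) for the appropriate unimodular constant \(c_{p,q}\); the Hodge--Riemann bilinear relations assert that \(h\) is positive definite. Using \(\overline{f^*\beta} = f^*\bar\beta\), the identity \(\omega^{n-j} = q^{-(n-j)}f^*(\omega^{n-j})\), and the fact that \(\int_X f^*\eta = q^n\int_X\eta\) for all \(\eta \in H^{2n}(X,\mathbb{C})\), one gets
\[
h(f^*\alpha, f^*\beta) = q^{-(n-j)}\, c_{p,q}\int_X f^*\bigl(\alpha\wedge\bar\beta\wedge\omega^{n-j}\bigr) = q^{-(n-j)}\, q^n\, h(\alpha,\beta) = q^{j}\, h(\alpha,\beta).
\]
Hence \(q^{-j/2} f^*\) is unitary for the positive definite form \(h\) on the finite-dimensional space \(P^{p,q}(X)\), so every eigenvalue of \(f^*\) on \(P^{p,q}(X)\) has modulus \(q^{j/2}\). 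Since \(f^*\) respects the Lefschetz decomposition \(H^j(X,\mathbb{C}) = \bigoplus_{k \ge 0} \omega^k \wedge P^{j-2k}(X)\) and wedging with \(\omega^k\) conjugates \(q^k\, f^*|_{P^{j-2k}}\) to \(f^*\) on the \(k\)-th summand, every eigenvalue of \(f^*\) on \(H^j(X,\mathbb{C})\) has modulus \(q^k \cdot q^{(j-2k)/2} = q^{j/2}\); as \(H^j(X,\mathbb{Z})\) is a lattice in \(H^j(X,\mathbb{C})\) up to torsion (which contributes no eigenvalues), this settles \(j \le n\).

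For \(j > n\) I would invoke the Hard Lefschetz theorem: wedging with \(\omega^{j-n}\) is an isomorphism \(H^{2n-j}(X,\mathbb{C}) \xrightarrow{\ \sim\ } H^j(X,\mathbb{C})\) that conjugates \(q^{j-n} f^*|_{H^{2n-j}}\) to \(f^*|_{H^j}\), because \(f^*(\omega^{j-n}\wedge\gamma) = q^{j-n}\omega^{j-n}\wedge f^*\gamma\). Since \(2n - j < n\), the previous step gives eigenvalues of modulus \(q^{(2n-j)/2}\) on \(H^{2n-j}\), hence modulus \(q^{j-n}\cdot q^{(2n-j)/2} = q^{j/2}\) on \(H^j\), completing the proof.

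The entire substantive content is the Kähler package --- the Hodge and Lefschetz decompositions, Hard Lefschetz, and above all the Hodge--Riemann bilinear relations providing definiteness on primitive cohomology --- which I would simply quote; granting it, everything above is routine bookkeeping. The only points requiring care are the precise normalization \(c_{p,q}\) making \(h\) positive definite (it cancels in the modulus computation, so its exact form is immaterial), the verification that \(f^*\) genuinely preserves primitivity (the one place beyond the degree calculation where \(f^*\omega = q\omega\) is essential), and the harmless passage between \(\mathbb{Z}\)-, \(\mathbb{R}\)-, and \(\mathbb{C}\)-coefficients; I anticipate no real obstacle beyond assembling these pieces correctly.
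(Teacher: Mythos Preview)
The paper does not give its own proof of this statement: Theorem~\ref{Serre} is simply quoted from Serre \cite{Ser} and used as a black box. So there is nothing in the paper to compare your argument against.

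That said, your proposal is correct and is precisely Serre's original proof (the ``analogue k\"ahl\'erien des conjectures de Weil''): use \(f^*\omega=q\omega\) to compute the topological degree as \(q^n\) and to see that \(f^*\) preserves primitive cohomology, then exploit the positive-definiteness from the Hodge--Riemann relations to conclude that \(q^{-j/2}f^*\) is unitary on each \(P^{p,q}\), and finally propagate via the Lefschetz decomposition and Hard Lefschetz. Your bookkeeping is accurate, including the eigenvalue count on the non-primitive summands and the passage to \(j>n\).
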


It is clear from Theorem \ref{Serre} that a polarized endomorphism must be unity-free if it occurs on a variety whose Albanese map is non-trivial - and hence cannot occur on a variety whose Albanese map is non-surjective. For a surjective endomorphism \(f\) of a smooth complex projective variety \(X\), the \(j\)-th dynamical degree of \(f\) is
\[\lambda_j(f) = \rho(f^*:H^{2j}(X,\mathbb{Z}) \rightarrow H^{2j}(X,\mathbb{Z})),\]
where \(\rho\) denotes the spectral radius. It is again clear from Theorem \ref{Serre} that polarized endomorphisms are excluded (because they have distinct consecutive dynamical degrees) from varieties with non-surjective Albanese maps by the following result.

\begin{theorem}[\cite{DinNguTru}, Corollary 1.4] \label{DNT}
Let \(f\) be a surjective endomorphism of a smooth complex projective variety \(X\) of dimension \(n\), and suppose that \(\alpha_X(X) \neq \Alb(X)\). Then there is an integer \(j \in \{0,\dots,n-1\}\) such that \(\lambda_j(f)=\lambda_{j+1}(f)\).
\end{theorem}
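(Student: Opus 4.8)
The plan is to use the non-surjectivity of the Albanese map to present $f$ as semiconjugate to a self-map of a base variety of general type---on which, by Theorem \ref{KobOch}, every dominant self-map has finite order---and then to read off the degrees $\lambda_j(f)$ from the product formula for dynamical degrees of semiconjugate maps. For the setup, let $V=\alpha_X(X)$, let $B=\Stab^0_{\Alb(X)}(V)$, let $q\colon\Alb(X)\to\Alb(X)/B$ be the quotient, and set $Z=q(V)$ and $\pi=q\circ\alpha_X\colon X\to Z$, a surjective morphism. By the proof of Theorem \ref{UenoPlus}, $Z$ is of general type with $\dim Z=\kappa(V)$, and by the universal property of the Albanese variety $\kappa(V)>0$, so $l:=\dim Z\ge 1$. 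The induced endomorphism $F$ of $\Alb(X)$ is surjective and satisfies $F(V)=V$, hence $\phi_F(B)=B$; thus $F$ descends to a surjective endomorphism $f_B$ of $\Alb(X)/B$ with $f_B(Z)=Z$, and for $h:=f_B|_Z$ one has $\pi\circ f=h\circ\pi$. After resolving the singularities of $Z$, the map $h$ lifts to a finite-order rational self-map of a smooth projective variety (finite order because $Z$ is of general type --- the mechanism already used for Theorem \ref{MainThm}), so $\lambda_i(h)=1$ for all $i$. One must still dispose of the case $l=n$: then $\dim V=n$ and $B=\{0\}$, so $\alpha_X\colon X\to V$ is generically finite onto the general-type variety $V$, whence $X$ is of general type and $f$ has finite order by Theorem \ref{KobOch}; then $\lambda_j(f)=1$ for every $j$ and $j=0$ works. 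So assume $1\le l\le n-1$.

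Next, apply the product formula for dynamical degrees of semiconjugate dominant maps \cite{DinNguTru} to $\pi\colon X\to Z$, whose generic fibre has dimension $n-l$. Writing $\nu_i:=\lambda_i(f\,|\,\pi)$ for the $i$-th relative dynamical degree of $f$ over $\pi$ (defined for $0\le i\le n-l$) and using $\lambda_i(h)=1$, this gives, for $0\le p\le n$,
\[
\lambda_p(f)\;=\;\max_{\max(0,\,p-l)\,\le\,i\,\le\,\min(n-l,\,p)}\lambda_{p-i}(h)\,\lambda_i(f\,|\,\pi)\;=\;\max_{\max(0,\,p-l)\,\le\,i\,\le\,\min(n-l,\,p)}\nu_i .
\]
So each $\lambda_p(f)$ is the maximum of the fixed finite list $\nu_0,\dots,\nu_{n-l}$ over a ``window'' of indices that slides with $p$.

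To conclude, pick an index $i_0\in\{0,\dots,n-l\}$ with $\nu_{i_0}=M:=\max_i\nu_i$. The window for a given $p$ is $[\max(0,p-l),\min(n-l,p)]$, and since $i_0\le n-l$ this window contains $i_0$ precisely when $i_0\le p\le i_0+l$; for any such $p$ we then get $\lambda_p(f)=M$. Because $l\ge 1$, both $p=i_0$ and $p=i_0+1$ satisfy $i_0\le p\le i_0+l$, so $\lambda_{i_0}(f)=\lambda_{i_0+1}(f)=M$. Since $i_0\le n-l\le n-1$, the integer $j=i_0$ lies in $\{0,\dots,n-1\}$ and satisfies $\lambda_j(f)=\lambda_{j+1}(f)$, as required.

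The final window argument is elementary; the real content, and the expected main obstacle, is the middle step: the existence and exact combinatorial form of the product formula for dynamical degrees under a semiconjugacy, together with the construction and basic properties (e.g.\ log-concavity, integrality of the top relative degree) of relative dynamical degrees, are substantial and are precisely what \cite{DinNguTru} supplies. A secondary technical wrinkle is that the base $Z$ is a priori singular, so the semiconjugacy and the finiteness of $h$ must be carried over to a resolution $\widetilde Z$; this is harmless since the relevant (relative) dynamical degrees are insensitive to such birational modifications.
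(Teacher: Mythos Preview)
The paper does not prove Theorem~\ref{DNT}; it is quoted as Corollary~1.4 of \cite{DinNguTru} and used as a black box. So there is no ``paper's own proof'' to compare your attempt against.

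That said, your proposal is a correct derivation, and it is essentially the argument by which Dinh--Nguyen--Truong obtain their Corollary~1.4 in \cite{DinNguTru}: one factors $\alpha_X$ through the Ueno fibration to get a semiconjugacy $\pi\colon X\to Z$ onto a positive-dimensional variety of general type, uses Kobayashi--Ochiai to force $\lambda_i(h)=1$ on the base, and then applies the product formula for dynamical degrees (the main theorem of \cite{DinNguTru}) so that each $\lambda_p(f)$ becomes a sliding-window maximum of the relative degrees $\nu_i$; two consecutive windows overlap because $l\ge 1$. Your handling of the boundary case $l=n$ and the observation that one must pass to a resolution of $Z$ (harmless for dynamical degrees) are both appropriate. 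As you yourself note, the substantive input here is precisely the product formula and the construction of relative dynamical degrees---that is the content of \cite{DinNguTru}, not something proved in the present paper---so your argument is not self-contained but rather a faithful sketch of how the cited result is deduced in its source.
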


We remark that Theorem \ref{DNT} is independent from Corollary \ref{PolEndCor}; that is, there exist endomorphisms which are not unity-free but have distinct dynamical degrees, and there exist endomorphisms which are unity-free but do not have distinct dynamical degrees.  An example of the former is simply the multiplication map $[2] \times [1]$ on $E \times E$ for any elliptic curve $E$. On the other hand, the automorphism (among others) of \(E \times E \times E \times E\) given by
\[(e_1,e_2,e_3,e_4) \mapsto (e_2,e_3,e_4,-e_1+3e_2+4e_3+3e_4)\]
is unity-free but does not have distinct consecutive dynamical degrees. (See also \cite{OguTru}.)

The constraint on varieties admitting polarized endomorphisms provided by Corollary \ref{PolEndCor} complements the following characterization.

\begin{theorem}[\cite{Fak}, Theorem 4.2]\label{Fakhruddin}
Let \(X\) be a smooth complex projective variety admitting a polarized endomorphism, and suppose that \(\kappa(X) \geq 0\). Then there is an abelian variety \(A\) and a finite surjective map \(\pi:A \rightarrow X\).
\end{theorem}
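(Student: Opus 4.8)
The plan is to deduce from the hypotheses that $K_X$ is numerically trivial, and then to feed this into the Beauville--Bogomolov decomposition, eliminating the Calabi--Yau and hyperk\"ahler factors via their rigidity under self-maps. Write $n=\dim X$ and fix an ample $L$ with $f^*L=L^{\otimes q}$, $q>1$. First note $f$ is finite: it contracts no curve $C$, since $f(C)$ a point would force $0=C\cdot f^*L=q(C\cdot L)>0$. Hence $\deg f=q^n$, the ramification formula $f^*K_X\sim K_X+R_f$ holds with an effective $R_f$, and the projection formula gives $f_*\circ f^*=q^n\cdot\mathrm{id}$ on $N^1(X)_{\mathbb R}$; in particular $f^*$ is invertible there and $f_*=q^n(f^*)^{-1}$.

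To see that $K_X\equiv 0$: since $\kappa(X)\geq 0$, the class $[K_X]$ lies in the pseudoeffective cone $P\subseteq N^1(X)_{\mathbb R}$, which is closed and salient. Pushing $f^*[K_X]=[K_X]+[R_f]$ forward by $f_*$ — which preserves $P$, as it carries effective divisors to effective divisors — and using $f_*f^*=q^n\,\mathrm{id}$ yields $q^{-n}f_*[K_X]-[K_X]\in P$; since $q^{-n}f_*$ also preserves $P$, iteration gives $q^{-kn}(f_*)^k[K_X]-[K_X]\in P$ for every $k$. By Serre's Theorem \ref{Serre} every eigenvalue of $f^*$ on $H^2(X,\mathbb Z)$, hence on the $f^*$-stable subspace $N^1(X)_{\mathbb R}$, has modulus $q$; so every eigenvalue of $f_*=q^n(f^*)^{-1}$ has modulus $q^{n-1}$, and therefore $q^{-kn}(f_*)^k\to 0$. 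Letting $k\to\infty$ and using that $P$ is closed gives $-[K_X]\in P$, and salience of $P$ then forces $[K_X]=0$.

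With $K_X$ numerically trivial, the Beauville--Bogomolov decomposition supplies a finite \'etale cover $\nu:\widetilde X=T\times Q\to X$ with $T$ an abelian variety and $Q$ a product of strict Calabi--Yau and irreducible hyperk\"ahler manifolds; thus $Q$ is simply connected, $K_Q=\mathcal O_Q$, and $\mathrm{Alb}(Q)=0$. Then $\mathrm{Alb}(\nu):T=\mathrm{Alb}(\widetilde X)\to\mathrm{Alb}(X)$ is an isogeny, $\alpha_X$ is surjective, and a general (smooth, connected) fiber $Z$ of $\alpha_X$ is finite \'etale covered by $Q$ — so $K_Z$ is torsion, hence $[K_Z]=0$, and $\pi_1(Z)$ is finite. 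Now $f$ permutes the fibers of $\alpha_X$ through $F=\mathrm{Alb}(f)$, which is unity-free (its action on $H^{1,0}$ is conjugate to $f^*$ on $H^{1,0}(X)$, whose eigenvalues have modulus $q^{1/2}>1$ by Serre); being unity-free, $F$ is, up to translation, an isogeny, so its periodic points — e.g.\ all torsion points of order prime to $\deg F$ — are Zariski dense. Pick such a periodic point $a$, $F^k(a)=a$, with $Z_a$ a general fiber, and after enlarging $k$ assume $f^k$ fixes a connected component $Z_a^0$. Then $h:=f^k|_{Z_a^0}$ is a surjective endomorphism with $[K_{Z_a^0}]=0$, so $[R_h]=0$; as $R_h\geq 0$ is numerically trivial it vanishes, so $h$ is \'etale by purity of the branch locus, and then $h$ is an automorphism because $\pi_1(Z_a^0)$ is finite, so $\deg h=1$. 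But $h$ is polarized with parameter $q^k$, so $\deg h=q^{k\dim Z_a^0}$; hence $\dim Z_a^0=0$, i.e.\ $\dim Q=0$. Therefore $\widetilde X=T$ is an abelian variety and $\nu:T\to X$ is the desired finite surjective map.

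The main obstacle is the first step, obtaining numerical triviality of $K_X$ from "$\kappa(X)\geq 0$" and "$f$ polarized" alone — this is the only place the Kodaira hypothesis enters — and it relies on three facts that need care: that $f_*$ preserves the pseudoeffective cone, that Serre's theorem pins down the \emph{entire} spectrum of $f_*$ on $N^1(X)$ (so the iterated push-forwards of $[K_X]$ decay), and that the pseudoeffective cone is salient. A secondary nuisance is the selection in the last step of a periodic point of $F$ lying over a general fiber, together with the passage to an iterate fixing one of its components; both are harmless since periodic points of an isogeny are Zariski dense while the general-fiber locus is Zariski open. One could in principle replace Beauville--Bogomolov and the last step by an induction on $\dim X$, restricting $f$ to the fiber over a periodic point of $F$; but promoting an inductive statement about a fiber to a finite cover of $X$ itself requires an isotriviality/gluing argument over $\mathrm{Alb}(X)$ that is at least as delicate as the route above.
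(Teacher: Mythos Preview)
The paper does not give a proof of this statement: Theorem~\ref{Fakhruddin} is quoted from Fakhruddin's paper \cite{Fak} and used as a black box, so there is no ``paper's own proof'' to compare your attempt against.

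That said, your argument is essentially sound and self-contained, with one slip worth flagging. You state the ramification formula as $f^*K_X\sim K_X+R_f$; the correct form for a finite morphism $f:X\to X$ between smooth varieties is $K_X\sim f^*K_X+R_f$, i.e.\ $f^*K_X\sim K_X-R_f$. With your (incorrect) sign, pushing forward would give $q^{-n}f_*[K_X]-[K_X]=-q^{-n}f_*[R_f]\in -P$, which is the wrong inclusion and would make the limit argument collapse to the tautology $K_X\in P$. With the correct sign one gets $f_*K_X=q^nK_X+f_*R_f$, hence $q^{-n}f_*[K_X]-[K_X]=q^{-n}f_*[R_f]\in P$, and then your iteration and limit (using that all eigenvalues of $q^{-n}f_*$ have modulus $q^{-1}<1$ via Serre) correctly yield $-[K_X]\in P$ and hence $[K_X]=0$. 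So the conclusion you state is right; only the displayed ramification identity needs its sign fixed.

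The remainder of your proof --- invoking Beauville--Bogomolov once $K_X\equiv 0$, using Serre to see the induced map $F$ on $\Alb(X)$ is unity-free, choosing a periodic general fibre, and then killing the Calabi--Yau/hyperk\"ahler factor via the \'etale/finite-$\pi_1$ argument --- is correct. Two small remarks: (i) connectedness of the general fibre of $\alpha_X$ follows from Kawamata's theorem once you know $\kappa(X)=0$, and you do get $\kappa(X)=0$ here since $K_X\equiv 0$ together with $\kappa(X)\ge 0$ forces $mK_X\sim 0$ for some $m$; (ii) strictly speaking $N^1(X)_{\mathbb R}$ is an $f^*$-stable \emph{subspace} of $H^2(X,\mathbb R)$ (via $\NS(X)_{\mathbb R}\hookrightarrow H^{1,1}\cap H^2(X,\mathbb R)$), which is what lets you transport Serre's eigenvalue bound there.
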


Since the Kodaira dimension of any abelian variety is zero, it follows from Theorem \ref{Fakhruddin} that the Kodaira dimension of any smooth complex projective variety admitting a polarized endomorphism must be non-positive. Corollary \ref{PolEndCor} addresses the case of a smooth complex projective variety with negative Kodaira dimension and a non-surjective Albanese map. For example, if \(X \rightarrow Y\) is a fiber bundle whose fibers have negative Kodaira dimension and whose base satisfies \(\alpha_Y(Y) \neq \Alb(Y)\), then \(\kappa(X)<0\) and (by the universal property of Albanese varieties) \(\alpha_X(X) \neq \Alb(X)\).

\subsection{Implications for a Dynamical Manin-Mumford Conjecture}\label{DMM}

The Manin-Mumford Conjecture (proved by Raynaud) states that a reduced and irreducible subvariety \(V \subseteq A\) of a complex abelian variety \(A\) is a torsion translate of an abelian subvariety if and only if \(V \cap \Tors(A)\) is Zariski dense in \(V\). (See, e.g., \cite{Ray} and \cite{GhiTucZha}.) The following conjecture (now known to be false) is a first attempt to transport this idea to dynamical systems.

\begin{conjecture}[\cite{Zha}, Conjecture 1.2.1]\label{DMMConj}
Let \(f\) be a polarized endomorphism of a smooth complex projective variety \(X\), and let \(Y \subseteq X\) be a reduced and irreducible subvariety. Then \(Y\) is preperiodic for \(f\) if and only if \(Y \cap \Preper(f)\) is Zariski dense in \(Y\).
\end{conjecture}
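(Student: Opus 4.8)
The plan is to treat the two implications of Conjecture \ref{DMMConj} separately; I expect only the first of them to survive.

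\emph{The ``only if'' direction.} Suppose $Y$ is preperiodic, say $f^m(Y)=f^n(Y)$ with $m>n\ge 0$; set $Z=f^n(Y)$ and $k=m-n$, so that $f^k(Z)=Z$. A polarized endomorphism contracts no curve --- the pullback of the ample bundle $L$ is the ample bundle $L^{\otimes q}$ --- hence has finite fibres and is finite. Restricting the polarization, $L|_Z$ is ample and $(f^k|_Z)^*(L|_Z)=(L|_Z)^{\otimes q^k}$, so $f^k|_Z$ is a polarized (in particular amplified) endomorphism of $Z$, and by Fakhruddin's density theorem \cite{Fak} the periodic points of $f^k|_Z$ are Zariski dense in $Z$. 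Since $f^n|_Y\colon Y\to Z$ is finite and surjective, since the $f$-preimage of a preperiodic point is preperiodic, and since a finite surjective morphism of irreducible varieties pulls a Zariski-dense set back to a Zariski-dense set, $Y\cap\Preper(f)$ is Zariski dense in $Y$.

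\emph{The ``if'' direction, and the reduction.} Here I would try to mimic the reduction behind Theorem \ref{PinRoe}. As $f$ is polarized, $\kappa(X)\le 0$; when $\kappa(X)=0$, Theorem \ref{Fakhruddin} supplies a finite surjective $\pi\colon A\to X$ with $A$ abelian, and --- after replacing $f$ by an iterate and, if necessary, $A$ by a further finite cover (cf.\ \cite{NakZha}) --- one would lift $f$ to a polarized endomorphism $g$ of $A$ and $Y$ to a reduced irreducible $\widetilde Y\subseteq A$ with $\widetilde Y\cap\Preper(g)$ Zariski dense in $\widetilde Y$, reducing to the abelian case. On $A$, Serre's Theorem \ref{Serre} forces every eigenvalue of $g^*$ on $H^{1,0}(A)$ to have absolute value $\sqrt q>1$, so $g$ is unity-free; by the Lefschetz argument of \S\ref{UnityFree1}, $g$ has a fixed point $\sigma$ and is conjugate by the translation $t_\sigma$ to a unity-free isogeny $\phi_g$, and Proposition \ref{MainProp} gives $\Preper(\phi_g)=\Tors(A)$, hence $\Preper(g)=\sigma+\Tors(A)$. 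So $(\widetilde Y-\sigma)\cap\Tors(A)$ is Zariski dense in $\widetilde Y-\sigma$, and the Manin--Mumford conjecture (Raynaud \cite{Ray}) gives $\widetilde Y-\sigma=\tau_0+B$ for a torsion point $\tau_0$ and an abelian subvariety $B\subseteq A$.

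\emph{The obstacle.} The remaining step would be to conclude that $\widetilde Y$ is preperiodic for $g$, i.e.\ that the orbit $\{g^i(\widetilde Y)\}_{i\ge 0}$ is finite --- and this is precisely where both the argument and the conjecture break down, so I expect this to be not merely the hard part but an impassable one. One computes $g^i(\widetilde Y)=\sigma+(\text{torsion})+\phi_g^{\,i}(B)$, and there is no reason for the $\phi_g$-orbit of the abelian subvariety $B$ to be finite: on $A=E\times E$ with $E$ of CM type the one-dimensional abelian subvarieties are parametrized by a $\mathbb{P}^1$ over the CM field, and a polarized isogeny --- one whose matrix $\Phi$ on the tangent space satisfies $\Phi^\dagger\Phi=q\,\mathrm{I}$ --- can act on this $\mathbb{P}^1$ as a projective transformation of infinite order, its eigenvalue ratio being an algebraic number on the unit circle that is not a root of unity. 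Choosing $Y=B$ to be such a subvariety makes $Y\cap\Preper(g)=Y\cap\Tors(A)$ Zariski dense in $Y$ while $Y$ is not preperiodic. This is the Ghioca--Tucker counterexample; it shows Conjecture \ref{DMMConj} is false as stated, and a correct formulation must add a hypothesis that controls the orbit of $B$ --- equivalently, that ties $Y$ to the canonical invariant measure of $f$ --- as in the reformulation of Ghioca, Tucker, and Zhang \cite{GhiTucZha}. From this vantage point, one role of Theorem \ref{MainThm} and of the discussion of amplified endomorphisms below is to isolate the amplified endomorphisms as the class on which a dynamical Manin--Mumford statement can be made sensible.
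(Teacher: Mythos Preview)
Your analysis is correct and well-aligned with the paper, but note first that the paper does not \emph{prove} this statement: Conjecture~\ref{DMMConj} is presented as a conjecture and the paper explicitly says it is ``now known to be false.'' So the relevant comparison is with the paper's \emph{discussion}, not a proof.

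Your ``only if'' argument is exactly the one the paper sketches in the introduction and in \S\ref{BasicProps}: restriction of a polarized (hence amplified) endomorphism to a periodic subvariety is again amplified (Proposition~\ref{Iterates}), and Fakhruddin's theorem (Theorem~\ref{Fak}) then gives density of periodic points there; pulling back along the finite surjection $f^n|_Y$ finishes it. The paper records the abelian case of this as Proposition~\ref{AbVarDMM}, with the extra conclusion that the periodic $V$ is a torsion translate of an abelian subvariety.

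Your treatment of the ``if'' direction also matches the paper. The reduction chain you outline --- Serre's Theorem~\ref{Serre} $\Rightarrow$ unity-free $\Rightarrow$ (Lefschetz, Proposition~\ref{MainProp}) $\Preper(g)=\sigma+\Tors(A)$ $\Rightarrow$ (Raynaud) $\widetilde Y-\sigma$ is a torsion translate of an abelian subvariety --- is precisely the constraint the paper records just after stating the conjecture. And your obstruction is the paper's: on $E\times E$ with $E$ of CM type, a coordinatewise multiplication by distinct elements of $\End(E)$ of the same magnitude is polarized, yet the diagonal --- a torsion translate of an abelian subvariety with dense torsion --- can have infinite orbit. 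This is the Ghioca--Tucker--Zhang counterexample the paper cites, and your description of the mechanism (the induced projective action on the $\mathbb{P}^1$ of one-dimensional abelian subvarieties having infinite order) is a clean way to see why $\{\phi_g^i(B)\}$ need not be finite.

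One small caveat: your reduction to the abelian case via Theorem~\ref{Fakhruddin} is hand-waved (the lifting of $f$ to $A$ is not automatic, and you do not address $\kappa(X)<0$), but since the conjecture already fails on abelian varieties themselves, this is harmless for your purposes. The paper likewise confines its discussion of the failure to the abelian setting.
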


When \(X\) is an abelian variety, Proposition \ref{MainProp} constrains the subvarieties \(Y\) which could disprove Conjecture \ref{DMMConj} by containing Zariski dense sets of preperiodic points without themselves being preperiodic: since, by Theorem \ref{Serre}, \(f\) is unity-free, it follows from Proposition \ref{MainProp} that \(\Preper(f)=\Tors(X)\); thus, by the Manin-Mumford Conjecture, any reduced and irreducible subvariety \(Y \subseteq X\) with \(Y \cap \Preper(f)\) Zariski dense in \(Y\) must be a torsion translate of an abelian subvariety of \(X\). We note below that Conjecture \ref{DMMConj} does in fact fail in this direction--and the main counterexamples are indeed torsion translates of abelian subvarieties which are not preperiodic. As for the converse direction of Conjecture \ref{DMMConj}, the following results show that it is true when \(X\) is an abelian variety even when the requirement that \(f\) be polarized is relaxed to require only that \(f\) be unity-free.

\begin{proposition}\label{AbVarDMM}
Let \(f:A \rightarrow A\) be a unity-free isogeny of a complex abelian variety \(A\), and suppose that \(V \subseteq A\) is a reduced and irreducible subvariety that is preperiodic for \(f\). Then \(V \cap \Preper(f)\) is Zariski dense in \(V\). Moreover, \(V\) is a torsion translate of an abelian subvariety of \(A\).
\end{proposition}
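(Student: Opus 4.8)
The plan is to reduce to the periodic case (which is already covered by Theorem~\ref{PinRoe} and the refinement of Remark~\ref{TorsTran}) and then pull the resulting structure back along an iterate of $f$.

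First, since $V$ is preperiodic there are integers $m>n\ge 0$ with $f^m(V)=f^n(V)$, so $V':=f^n(V)$ satisfies $g(V')=V'$ where $g:=f^{m-n}$. Here $g$ is again an isogeny, and it is again unity-free: the eigenvalues of $g^*$ on $H^{1,0}(A)$ are the $(m-n)$-th powers of those of $f^*$, and no power of a non-root-of-unity is a root of unity. Hence Theorem~\ref{PinRoe} applies to $(g,V')$, and the argument recorded in Remark~\ref{TorsTran} gives $V'=B+w$ with $B=\Stab^0_A(V')$ an abelian subvariety and $w\in\Tors(A)$.

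Next I would transport this back to $V$ via the inclusion $V\subseteq (f^n)^{-1}(V')=(f^n)^{-1}(B+w)$. Pick $w'$ with $f^n(w')=w$; since $f^n$ is an isogeny and $w$ is torsion, $w'$ is automatically torsion, and $(f^n)^{-1}(B+w)=w'+(f^n)^{-1}(B)$. Let $B''$ be the identity component of the closed subgroup $(f^n)^{-1}(B)\subseteq A$. Then $f^n$ restricts to a surjective isogeny $B''\to B$ (its image is a connected subgroup of $B$, and a dimension count together with irreducibility of $B$ forces it to be all of $B$), so $\dim B''=\dim B$, and $(f^n)^{-1}(B+w)$ is a finite union of translates of $B''$. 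As $V$ is irreducible it lies in a single such translate $T$, and since $f^n$ is an isogeny, $\dim V=\dim f^n(V)=\dim V'=\dim B=\dim B''=\dim T$; by irreducibility of $T$ this gives $V=T$. Finally, writing $T=w'+t+B''$ with $t\in(f^n)^{-1}(B)$: since $f^n(t)\in B=f^n(B'')$, the point $t$ differs from an element of $B''$ by an element of $\ker(f^n)\subseteq\Tors(A)$, and absorbing that element into $B''$ shows $V=\sigma+B''$ for some $\sigma\in\Tors(A)$. This is the ``moreover'' assertion.

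The Zariski density of $V\cap\Preper(f)$ is then immediate: $\sigma+\Tors(B'')\subseteq V\cap\Tors(A)\subseteq V\cap\Preper(f)$, using that $\Tors(A)\subseteq\Preper(f)$ for an isogeny, and $\sigma+\Tors(B'')$ is dense in $\sigma+B''=V$; alternatively one may quote Proposition~\ref{MainProp} to see $\Preper(f)=\Tors(A)$ and conclude via the Manin--Mumford conjecture. I expect the pull-back step to be the only real obstacle --- specifically, the bookkeeping of which translates of $B''$ occur in $(f^n)^{-1}(B+w)$ and the verification that the component containing $V$ is a \emph{torsion} translate rather than an arbitrary one; the dimension count forcing $V$ to fill out an entire component is the key leverage, and everything else is formal (the argument degenerating harmlessly when $n=0$).
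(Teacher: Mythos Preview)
Your proof is correct, and it takes a somewhat different (and in one respect more elementary) route than the paper's.

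Both arguments begin the same way: pass to $V'=f^n(V)$, which is fixed by a unity-free iterate, and invoke Remark~\ref{TorsTran} to write $V'=B+w$ with $w\in\Tors(A)$. The divergence is in how the ``moreover'' clause is obtained. The paper proceeds indirectly: torsion is dense in $V'$; pulling back along $f^n$ gives a Zariski-dense set of preperiodic points in $V$; since $f$ is unity-free, Proposition~\ref{MainProp} gives $\Preper(f)=\Tors(A)$, so $V\cap\Tors(A)$ is dense; and finally the hard (Raynaud) direction of Manin--Mumford forces $V$ to be a torsion translate of an abelian subvariety. You instead analyze $(f^n)^{-1}(B+w)$ directly as a finite union of torsion translates of the identity component $B''$ of $(f^n)^{-1}(B)$, and use irreducibility plus the dimension count $\dim V=\dim f^n(V)=\dim B=\dim B''$ to pin $V$ down as one of these translates. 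This bypasses Raynaud's theorem entirely, which is a genuine economy. The density statement then falls out trivially from $\sigma+\Tors(B'')\subseteq V\cap\Tors(A)$, whereas the paper obtains density first and uses it as the input to Manin--Mumford. Your closing worry about the bookkeeping is unfounded: the observation that any $t\in(f^n)^{-1}(B)$ differs from an element of $B''$ by an element of $\ker(f^n)\subseteq\Tors(A)$ is exactly what guarantees every component of $(f^n)^{-1}(B+w)$ is a \emph{torsion} translate, so there is no obstacle there.
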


\begin{proof}
There are iterates \(f^{k_1}\) and \(f^{k_2}\) such that \(f^{k_1}(f^{k_2}(V))=f^{k_2}(V)\). Since \(f^{k_1}\) is a unity-free isogeny, Remark \ref{TorsTran} shows that \(f^{k_2}(V)\) must be a torsion translate of an abelian subvariety of \(A\). So, by the Manin-Mumford Conjecture, the set \(P = f^{k_2}(V) \cap \Tors(A)\) must be Zariski dense in \(f^{k_2}(V)\). Then \((f^{k_1})^{-1}(P) \cap V\) is Zariski dense in \(V\) and consists entirely of points in \(\Preper(f)\). Since \(\Preper(f)=\Tors(A)\), \(V\) must itself be a torsion translate of an abelian subvariety of \(A\).
\end{proof}

If \(V\) (with \(\dim(V)>0\)) is periodic, so fixed by some iterate \(f^k\), in Proposition \ref{AbVarDMM}, then \(f^k|_V\) is again a unity-free isogeny: there is some \(\tau \in V \cap \Tors(A)\) that is fixed by some iterate \(f^{k'}\) with \(k|k'\); so \(V'=V-\tau\) is an abelian subvariety of \(A\) satisfying \(f^{k'}(V')=V'\); it follows from Lemma \ref{EigenLemma} that \(f^{k'}\) is unity-free on \(V'\); finally, since \(f^{k'}|_{V'}\) is conjugate to \(f^{k'}|_V\), \(f^{k'}\) (and hence also \(f^k\)) must be unity-free on \(V\). By the following result, we conclude that \(V\) in fact contains a Zariski dense set of periodic points.

\begin{proposition}\label{AbVarZDense}
Let \(f:A \rightarrow A\) be a unity-free isogeny of a complex abelian variety \(A\). Then the set of periodic points for \(f\) is Zariski dense in \(A\).
\end{proposition}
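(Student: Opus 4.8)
The plan is to produce an explicit Zariski-dense set of periodic points lying inside $\Tors(A)$, using only that $f$ is a homomorphism with finite kernel. The key elementary point is that for every positive integer $N$ coprime to $d := \deg(f) = \#\ker(f)$, the isogeny $f$ restricts to an automorphism of the finite group $A[N] = \{a \in A : Na = 0\}$: it carries $A[N]$ into itself because $f$ is a homomorphism, and $\ker(f)\cap A[N] = \{0\}$ because any point of that intersection has order dividing both $N$ and $d$, so $f|_{A[N]}$ is injective, hence bijective. Since a bijection of a finite set has all of its points periodic, every point of $A[N]$ is periodic for $f$. Fixing a prime $\ell\nmid d$ (there is one, since $d$ has only finitely many prime divisors) and letting $N$ run through the powers of $\ell$, we conclude that the whole $\ell$-primary torsion subgroup $A[\ell^\infty] := \bigcup_{n\geq 1}A[\ell^n]$ consists of periodic points of $f$. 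It therefore suffices to show that $A[\ell^\infty]$ is Zariski dense in $A$.

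This last fact is standard, and I would argue it as follows. Let $Z\subseteq A$ be the Zariski closure of $A[\ell^\infty]$. As $A[\ell^\infty]$ is a subgroup, $Z$ is a closed algebraic subgroup of $A$; its identity component $B := Z^0$ is an abelian subvariety, and $Z$ is a finite union of cosets of $B$, so the image of $Z$ in $A/B$ is a finite set. Let $\pi\colon A\to A/B$ be the quotient map. For each $n$, the kernel of $\pi\colon A[\ell^n]\to (A/B)[\ell^n]$ is $A[\ell^n]\cap B = B[\ell^n]$, which has $\ell^{2n\dim B}$ elements; since $A[\ell^n]$ has $\ell^{2n\dim A}$ elements, its image $\pi(A[\ell^n])$ has $\ell^{2n\dim(A/B)} = \#(A/B)[\ell^n]$ elements, and therefore equals $(A/B)[\ell^n]$. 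Taking the union over $n$ gives $\pi(A[\ell^\infty]) = (A/B)[\ell^\infty]$, which is contained in the finite set $\pi(Z)$; hence $(A/B)[\ell^\infty]$ is finite, which forces $\dim(A/B) = 0$. So $B = A$ and $Z = A$. Consequently the set of periodic points of $f$, which contains $A[\ell^\infty]$, is Zariski dense in $A$.

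I do not expect a genuine obstacle here: the argument is essentially formal, the only nontrivial input being the cardinality $\#B[\ell^n] = \ell^{2n\dim B}$ of the $\ell$-power torsion of a complex abelian variety, which is immediate from its lattice description. I would also point out that the unity-free hypothesis is not actually used above; it is retained only because the proposition is invoked within the unity-free framework of this section — in particular for the isogeny $f^k|_V$ appearing in the discussion preceding the statement.
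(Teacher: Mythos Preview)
Your argument is correct and is in fact more elementary and more general than the paper's. The paper proceeds by taking the Zariski closure $B$ of the set of periodic points, invoking Proposition~\ref{AbVarDMM} (which needs the unity-free hypothesis) to see that each component of $B$ is a torsion translate of an abelian subvariety, then passing to the quotient $A/B^0$ and arguing that the induced isogeny $f_B$ is again unity-free (via Lemma~\ref{EigenLemma}) but has only finitely many periodic points, which it declares impossible. By contrast, you exhibit an explicit Zariski-dense set of periodic points, namely $A[\ell^\infty]$ for any prime $\ell\nmid\deg f$, and verify density by an elementary counting argument on $\ell$-power torsion. Your route avoids Proposition~\ref{AbVarDMM}, Lemma~\ref{EigenLemma}, and any appeal to the structure of the closure of the periodic locus; and, as you correctly note, it never uses the unity-free hypothesis, so it actually proves the stronger statement that the periodic points of \emph{any} isogeny of a complex abelian variety are Zariski dense. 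The paper's approach, while less direct here, has the advantage of staying within the structural framework (stabilizers, quotients, eigenvalue bookkeeping) used throughout the section.
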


\begin{proof}
Let \(B \subseteq A\) be the Zariski closure of the periodic points for \(f\), and let \(B^0\) be an irreducible component of \(B\) containing the identity. Since \(f(B)=B\), every irreducible component of \(B\) is preperiodic, and hence is a torsion translate of an abelian subvariety of \(A\). If \(B'\) is an irreducible component of \(B\) containing the identity, then every point of the form \(\tau+\tau'\) with \(\tau \in B^0\) periodic and \(\tau' \in B'\) periodic is also periodic and the set of all such points is Zariski dense in \(B^0 + B'\); so \(B'=B^0\). Since the identity is a fixed point for \(f\), it follows that \(f(B^0)=B^0\). If \(B'\) is any irreducible component of \(B\) and \(\tau' \in B'\) is periodic, then \(B'-\tau'\) is contained in \(B^0\). Let \(q:A \rightarrow A/B^0\) be the quotient map, and let \(f_B:A/B^0 \rightarrow A/B^0\) be the quotient of \(f\). So the image of \(B\) in \(A/B^0\) is a finite set of points. If \(\sigma \in A/B^0\) is periodic for \(f_B\), then the orbit of \(q^{-1}(\sigma)\) is finite under \(f\) and some component of \(q^{-1}(\sigma)\) is periodic under \(f\); so \(q^{-1}(\sigma) \cap B \neq \emptyset\) and \(\sigma\) is in the image of \(B\). Thus (by Lemma \ref{EigenLemma}) \(f_B\) is a unity-free isogeny with only finitely many periodic points if \(\dim(A/B^0)>0\), which cannot happen. So \(B^0=A\).
\end{proof}

As shown by Ghioca, Tucker, and Zhang \cite{GhiTucZha}, counterexamples to Conjecture \ref{DMMConj} can be constructed on an abelian surface of the form \(E \times E\), where \(E\) is an elliptic curve with complex multiplication: an endomorphism given by coordinate-wise multiplication by distinct elements of \(\End(E)\) with the same magnitude will always be polarized, but may give the diagonal in \(E \times E\) an infinite orbit; on the other hand, the diagonal in \(E \times E\) will always contain infinitely many torsion points, all of which must be preperiodic for the endomorphism. For additional details, see \cite{GhiTucZha}, \S 2. Many similar examples can also be constructed on higher-dimensional abelian varieties; see \cite{Paz}. All of the known counterexamples to Conjecture \ref{DMMConj} come from this type of construction, and attempts have been made to modify Conjecture \ref{DMMConj} to accommodate these examples. Ghioca, Tucker, and Zhang offer the following modification:

\begin{conjecture}[\cite{GhiTucZha}]\label{GTZconj} Let $X$ be a projective variety, $f:X \rightarrow X$ a polarized endomorphism defined over $\mathbb{C}$, and $Y$ a subvariety with no component contained in the singular part of $X$.  Then $Y$ is preperiodic under $f$ if and only if there exists a subset of smooth preperiodic points $x \in Y$ which are Zariski dense in $Y$, such that the tangent subspace of $Y$ at $x$ is preperiodic under the induced action of $f$ on the Grassmanian Gr$_{dim(Y)}(T_{X,x})$.  
\end{conjecture}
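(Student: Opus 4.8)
The statement is Conjecture \ref{GTZconj} in the case $X = A$ is a complex abelian variety; I take $f$ polarized as stated, but the argument will only use that $f$ is unity-free (which polarized implies, by Theorem \ref{Serre}), and I take $Y$ reduced and irreducible, the general case following by passing to components. Note first that the singular locus of an abelian variety is empty, so every point of $Y$ is a smooth point of $X$ and ``smooth preperiodic points of $Y$'' just means $Y^{\mathrm{sm}} \cap \Preper(f)$, a set which is Zariski dense in $Y$ as soon as $Y \cap \Preper(f)$ is. The plan is to normalize first: since $f$ is unity-free it has a fixed point, so after conjugating by a translation we may assume $f = \phi_f$ is an isogeny, i.e.\ a group homomorphism; translations act trivially on tangent spaces and on the Grassmannian bundle, so this changes neither the preperiodicity of $Y$, nor the density of its preperiodic points, nor the tangent-subspace hypothesis. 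Writing $A = \mathbb{C}^n/\Lambda$ with $\phi_f$ the descent of $\Phi \in \GL_n(\mathbb{C})$ (which has no root-of-unity eigenvalue), the differential of $f$ at every point is $\Phi$ under the canonical trivialization of the tangent bundle, so $f$ is \'etale and the induced action on $\mathrm{Gr}_{\dim(Y)}(T_{A,x})$ is, for every $x$, the \emph{same} self-map $g$ of $\mathrm{Gr}_{\dim(Y)}(\mathbb{C}^n)$ induced by $\Phi$. Finally, since $f$ is a unity-free isogeny, Proposition \ref{MainProp} gives $\Preper(f) = \Tors(A)$.

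For the forward implication, suppose $Y$ is preperiodic. By Proposition \ref{AbVarDMM}, $Y$ is a torsion translate $\tau + B$ of an abelian subvariety $B \subseteq A$ and $Y \cap \Tors(A) = Y \cap \Preper(f)$ is Zariski dense in $Y$, so the required dense set of preperiodic points exists. At every point of $Y = \tau + B$ the tangent subspace is the fixed subspace $\mathrm{Lie}(B) \subseteq \mathbb{C}^n$. Picking $\ell$ with $f^\ell(Y) = Y$ and equating the ``linear parts'' of the coset identity $\phi_f^\ell(\tau) + \phi_f^\ell(B) = \tau + B$ forces $\phi_f^\ell(B) = B$, hence $\Phi^\ell(\mathrm{Lie}(B)) = \mathrm{Lie}(B)$; so $\mathrm{Lie}(B)$ is a fixed point of $g^\ell$, i.e.\ the tangent subspace is preperiodic under the induced action.

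For the converse, suppose $Y$ carries a Zariski dense set of smooth preperiodic points whose tangent subspaces are preperiodic under the induced action. Since $\Preper(f) = \Tors(A)$, the set $Y \cap \Tors(A)$ is Zariski dense in $Y$, so by the Manin--Mumford theorem $Y = \tau + B$ with $\tau \in \Tors(A)$ and $B \subseteq A$ an abelian subvariety; as before $T_xY = \mathrm{Lie}(B)$ at every smooth $x$. The hypothesis then says $\mathrm{Lie}(B)$ is preperiodic under $g$, and since $g$ is induced by the \emph{invertible} map $\Phi$, preperiodic means periodic: $\Phi^\ell(\mathrm{Lie}(B)) = \mathrm{Lie}(B)$ for some $\ell \geq 1$. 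An abelian subvariety of $A$ is determined by its Lie algebra inside $\mathbb{C}^n$, so this gives $\phi_f^\ell(B) = B$. Using that $f = \phi_f$ is a homomorphism, $f^{\ell j}(Y) = \phi_f^{\ell j}(\tau) + B$ for all $j \geq 0$, and the points $\phi_f^{\ell j}(\tau)$ all have order dividing the order of $\tau$, hence lie in a finite subset of $A$; so the $f^\ell$-orbit of $Y$ is finite and $Y$ is preperiodic for $f$.

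The substance is entirely in the converse, and there the two hypotheses do genuinely different jobs, which I expect to be the conceptual point to get across clearly: density of preperiodic points is what, via $\Preper(f) = \Tors(A)$ (Proposition \ref{MainProp}) and Manin--Mumford, pins down the \emph{translation} part of $Y$, forcing it to be a torsion translate of an abelian subvariety; and the preperiodic-tangent-subspace hypothesis is precisely what pins down the \emph{linear} part, forcing the abelian subvariety $B$ itself to be periodic under $\phi_f$. Neither alone suffices --- a torsion translate of an abelian subvariety whose direction is not $\phi_f$-periodic need not be preperiodic, which is exactly why the bare statement of Conjecture \ref{DMMConj} fails and why the tangent hypothesis was added. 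The only points requiring care are the normalizations of the first paragraph: reducing to an \'etale $f$ so that ``the induced action on the Grassmannian'' is genuinely well defined and is a single basepoint-independent linear map, and then tracking torsion orders in the final step.
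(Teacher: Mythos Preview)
The statement you are addressing is Conjecture~\ref{GTZconj}, which the paper does \emph{not} prove: it is recorded as an open conjecture due to Ghioca--Tucker--Zhang, and the paper only notes (without argument) that it has been verified in \cite{GhiTucZha} for group endomorphisms of abelian varieties and for lines under product maps on $\mathbb{P}^1 \times \mathbb{P}^1$. There is thus no proof in the paper to compare your proposal against; what you have written is a self-contained proof of the abelian-variety case, supplying what the paper only cites.

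Your argument is essentially correct and is well organized around the distinct roles of the two hypotheses (density of preperiodic points pins down the translation part via Proposition~\ref{MainProp} and Manin--Mumford; the tangent condition pins down the linear part). One small gap: in the forward implication you write ``Picking $\ell$ with $f^\ell(Y) = Y$'', which presumes $Y$ is periodic rather than merely preperiodic. The fix is immediate: from $f^m(Y) = f^n(Y)$ with $m > n \geq 0$, comparing the abelian-subvariety parts of the two cosets gives $\phi_f^m(B) = \phi_f^n(B)$, hence $\Phi^m(\mathrm{Lie}(B)) = \Phi^n(\mathrm{Lie}(B))$, and invertibility of $\Phi$ yields $\Phi^{m-n}(\mathrm{Lie}(B)) = \mathrm{Lie}(B)$, so $\mathrm{Lie}(B)$ is periodic under $g$ as claimed. (Equivalently, apply your coset argument to the genuinely periodic variety $f^n(Y)$ and then cancel $\Phi^n$.)
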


In \cite{GhiTucZha}, Conjecture \ref{GTZconj} is verified for group endomorphisms of abelian varieties, and for the case $X = \mathbb{P}^1 \times \mathbb{P}^1$, $Y$ a line, and $f$ a product map. It is worth noting that the tangent space condition is essentially used only to eliminate counterexamples of the form mentioned above, though they can appear subtly in the form of Latt\'es maps.

\begin{remark}\label{UFDMM}
In light of Propositions \ref{AbVarDMM} and \ref{AbVarZDense}, it is natural to ask if the assumption in Conjecture \ref{DMMConj} that \(f\) is polarized should be replaced by the assumption that \(f\) is unity-free (along with whatever other changes are made to account for the known counterexamples). However, outside the realm of abelian varieties, a unity-free endomorphism can fail to have a Zariski dense set of preperiodic points: the endomorphism of \(\mathbb{P}^1 \times E\) (where \(E\) is an elliptic curve) given by
\[([x_0:x_1],e) \mapsto ([2x_0:x_1],2e)\]
is unity-free, but has all of its preperiodic points contained in \(\{[0,1],[1,0]\} \times E\). Moreover, it is possible in general for a unity-free endomorphism to have an invariant subvariety on which the restriction is not unity-free.
\end{remark}

\section{Cohomological Properties of Endomorphisms}

\subsection{Properties of Polarized and Amplified Endomorphisms}\label{BasicProps}

We place the cohomological conditions of the above theorems into context among other endomorphisms of projective varieties. 

\begin{proposition}\label{Iterates} Let $X$ be a smooth projective variety over $\mathbb{C}$, and $\phi: X \rightarrow X$ a surjective endomorphism.  The following hold:
\begin{enumerate}
\item If $f$ is polarized (resp. amplified or unity-free), $f^k$ is polarized (resp. amplified or unity-free) for all $k \geq 1$.  \\
\item If $f$ is polarized (resp. amplified) and $Y$ is a closed subvariety of $X$ satisfying $f(Y) = Y$, then $f \mid_Y$ is polarized (resp. amplified). \\
\item If $f$ is amplified, each set of the form $\{ x \in X : f^m(x) = f^n(x) \}$ for $m > n \geq 0$ is a finite set.
\end{enumerate}
\end{proposition}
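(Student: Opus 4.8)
The plan is to dispatch parts (1) and (2) by direct manipulation of the definitions and to put the real work into part (3). For (1), in the polarized case one simply iterates: if $f^{*}L=L^{\otimes q}$ with $L$ ample and $q>1$, then $(f^{k})^{*}L=L^{\otimes q^{k}}$ with $q^{k}>1$. In the amplified case the key is the telescoping identity
\[
(f^{k})^{*}L\otimes L^{-1}=\bigotimes_{i=0}^{k-1}(f^{i})^{*}\!\bigl(f^{*}L\otimes L^{-1}\bigr),
\]
whose every factor is the pullback along the morphism $f^{i}$ of the ample (hence nef) line bundle $f^{*}L\otimes L^{-1}$, and is therefore nef, while the $i=0$ factor is ample; as a tensor product of an ample line bundle with nef line bundles the whole thing is ample, so $f^{k}$ is amplified by the same $L$. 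In the unity-free case the eigenvalues of $(f^{k})^{*}$ on $H^{1,0}(X)$ are the $k$-th powers of those of $f^{*}$, and $\lambda^{k}$ is a root of unity exactly when $\lambda$ is. For (2), since $f(Y)=Y$ the restriction $f|_{Y}$ is a surjective endomorphism of the projective variety $Y$; if $f^{*}L=L^{\otimes q}$ with $L$ ample then $L|_{Y}$ is ample and $(f|_{Y})^{*}(L|_{Y})=(f^{*}L)|_{Y}=(L|_{Y})^{\otimes q}$, while if $f^{*}L\otimes L^{-1}$ is ample then $(f|_{Y})^{*}(L|_{Y})\otimes(L|_{Y})^{-1}=(f^{*}L\otimes L^{-1})|_{Y}$ is the restriction of an ample bundle to a closed subvariety, hence ample.

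For (3), I would first show that the fixed locus $\mathrm{Fix}(f^{k})=\{x:f^{k}(x)=x\}$ is finite for every $k\ge 1$. Suppose $Y_{0}$ is a positive-dimensional irreducible component of $\mathrm{Fix}(f^{k})$. Then $f^{k}$ restricts to the identity on $Y_{0}$, so in particular $f^{k}(Y_{0})=Y_{0}$, and parts (1) and (2) imply that the identity map $f^{k}|_{Y_{0}}$ is amplified; that is, there is $M\in\Pic(Y_{0})$ with $\mathcal{O}_{Y_{0}}=M\otimes M^{-1}$ ample, which is impossible for a variety of positive dimension. Next, writing $f^{m}=f^{m-n}\circ f^{n}$, one has $f^{m}(x)=f^{n}(x)$ if and only if $f^{n}(x)\in\mathrm{Fix}(f^{m-n})$, so
\[
\{x:f^{m}(x)=f^{n}(x)\}=(f^{n})^{-1}\bigl(\mathrm{Fix}(f^{m-n})\bigr)
\]
(which reads $\mathrm{Fix}(f^{m})$ when $n=0$). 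Since a surjective endomorphism of a projective variety is a finite morphism, $f^{n}$ has finite fibres, and the preimage of the finite set $\mathrm{Fix}(f^{m-n})$ is finite.

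The step I expect to be the main obstacle is the appeal to the fact that $f$ is a \emph{finite} morphism. This is standard for surjective endomorphisms of projective varieties, but it is not a formal consequence of parts (1) and (2). If one wants to avoid quoting it, one can only go part of the way: the set $Z=\{x:f^{m}(x)=f^{n}(x)\}$ is closed and forward-invariant, so the descending chain $Z\supseteq f(Z)\supseteq f^{2}(Z)\supseteq\cdots$ stabilizes to a subvariety $Z_{\infty}$ with $f(Z_{\infty})=Z_{\infty}$; writing any $w\in Z_{\infty}$ as $w=f^{n}(z')$ with $z'\in Z_{\infty}\subseteq Z$ shows $f^{m-n}(w)=f^{m}(z')=f^{n}(z')=w$, so $f^{m-n}|_{Z_{\infty}}=\mathrm{id}$, whence $Z_{\infty}$ is finite by the argument of the previous paragraph. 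Getting from this to finiteness of $Z$ itself, rather than of its eventual image $Z_{\infty}$, is exactly where the finite-fibre property of $f$ is needed.
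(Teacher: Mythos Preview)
Your proof is correct and follows essentially the same route as the paper: parts (1) and (2) are dispatched by direct manipulation (the paper simply says they are ``easily checked''), and for (3) the core idea---that a positive-dimensional variety on which an iterate acts as the identity cannot carry an amplified map, since the trivial bundle would then be ample---is identical. You organize (3) as ``$\mathrm{Fix}(f^{m-n})$ is finite, then pull back along the finite map $f^{n}$,'' while the paper pushes forward the infinite locus along $f^{n}$ and derives the contradiction on the image; these are the same argument, and in fact you are more explicit than the paper about the one subtle point, namely that finiteness of $f$ (a standard fact for surjective endomorphisms of projective varieties) is needed to ensure the relevant set stays positive-dimensional.
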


\begin{proof} The first two statements are easily checked, since the restriction of an ample divisor to a closed subvariety is ample, and restriction commutes with the action of $f^*$.  For the third, note that if $m>n \geq 0$ and $\{ x \in X : f^m(x) = f^n(x) \}$ is not finite, then it is a closed, positive-dimensional subvariety $Y$ of $X$, and $Z = f^n(Y)$ is pointwise-fixed by $f^{m-n}$.  Since $f$ is amplified, $g := f^{m-n} \mid_Z$ is amplified, so there exists a line bundle $L$ on $Z$ such that $g^*(L) \otimes L^{-1}$ is ample.  However, $g$ acts trivially on $Z$; so we conclude that the trivial bundle on $Z$ is ample, a contradiction.
\end{proof}

When an endomorphism $f$ is not amplified, there is an immediate consequence for the action of \(f^*\) on the N\'eron-Severi group \(\NS(X)\): since the linear transformation \(f^*-ID\) cannot be surjective on \(\NS(X)_\mathbb{Q}\) (as it must miss the ample cone), 1 must be an eigenvalue of \(f^*\) on \(\NS(X)_\mathbb{Q}\); so there must be some line bundle in \(\Pic(X)\) has numerical equivalence class is fixed by \(f^*\).

As discussed in \S \ref{DMM}, the simplest proposed version of a dynamical Manin-Mumford conjecture was proven to be false in \cite{GhiTucZha}, and an alternate conjecture proposed. Both versions include the strong hypothesis that the endomorphism $f: X \rightarrow X$ be polarized.  However, for the direction of the conjecture which is true, this hypothesis is unnecessarily strong, as was shown by Fakhruddin. 

\begin{theorem} [\cite{Fak}] \label{Fak} Let $X$ be a projective variety over an algebraically closed field, and $f:X \rightarrow X$ a dominant amplified morphism.   Then the subset of $X$ consisting of periodic points is Zariski dense in $X$.
\end{theorem}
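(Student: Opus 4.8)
The plan is to reduce, by spreading out, to the case of a finite residue field, to settle that case using the Frobenius, and then to lift periodic points back to the generic fibre using the finiteness supplied by Proposition \ref{Iterates}(3). (When the ground field is a global field one could instead try the canonical-height machinery of Call and Silverman \cite{CalSil}, but this would first require extending it from polarized to amplified morphisms.)

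First I would descend: $X$, the morphism $f$, and a line bundle $L$ with $f^*L\otimes L^{-1}$ ample are all defined over a subfield $K$ that is finitely generated over the prime field, so the base may be taken to be $\overline{K}$. Pick an integral scheme $S$ of finite type over $\mathbb{Z}$ (or over $\mathbb{F}_p$) with function field $K$, spread the data out to a flat projective $\mathcal{X}\to S$ with an $S$-endomorphism $\mathcal{F}$ and a line bundle $\mathcal{L}$, and shrink $S$ so that $\mathcal{F}$ is fibrewise surjective and $\mathcal{F}^*\mathcal{L}\otimes\mathcal{L}^{-1}$ is relatively ample; then every fibre $f_s\colon X_s\to X_s$ is a surjective amplified endomorphism, and each closed point $s$ of $S$ has finite residue field. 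Fixing such an $s$, let $q$ be a power of the residue characteristic over which $f_s$ is defined and let $\Phi$ be the $q$-power Frobenius of $X_s$, which commutes with $f_s$. The key observation is that if $f_s(x)=\Phi^n(x)$ then $f_s^k(x)=\Phi^{nk}(x)$ for all $k$, so choosing $k$ with $x\in X_s(\mathbb{F}_{q^{nk}})$ gives $f_s^k(x)=x$; hence the locus $\{x : f_s(x)=\Phi^n(x)\}=\Gamma_{f_s}\cap\Gamma_{\Phi^n}$ consists entirely of $f_s$-periodic points. A Lefschetz/K\"unneth count — $\#\{x:f_s(x)=\Phi^n(x)\}\sim q^{n\dim X_s}$, whereas the same count on any proper closed $Z\subsetneq X_s$ is $O(q^{n(\dim X_s-1)})$ — then shows that for $n$ large these periodic points escape $Z$, so $\mathrm{Per}(f_s)$ is Zariski dense in $X_s$.

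The last step is to transfer this density to the generic fibre $X$. Suppose for contradiction that $\mathrm{Per}(f)$ has Zariski closure $W\subsetneq X$; spread $W$ out to $\mathcal{W}\subsetneq\mathcal{X}$ over a dense open of $S$. Since $\mathrm{Per}(f)=\bigcup_{m}\{x:f^m(x)=x\}$ and, by Proposition \ref{Iterates}(3), each set $\{x:f^m(x)=x\}$ is finite, the relative fixed locus $\mathrm{Fix}(\mathcal{F}^m/S)$ is quasi-finite over $S$ near the generic point; its components dominating $S$ are the (finitely many) $m$-periodic points of $f$, which by assumption lie in $\mathcal{W}$, while its remaining components lie over a proper closed subset of $S$. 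So for each $m$ there is a proper closed $B_m\subsetneq S$ with $\{x:f_s^m(x)=x\}\subseteq\mathcal{W}_s$ for $s\notin B_m$, and any closed point $s\notin\bigcup_m B_m$ would then satisfy $\mathrm{Per}(f_s)\subseteq\mathcal{W}_s\subsetneq X_s$, contradicting the previous step. The hard part will be producing such an $s$: the $B_m$ are each proper closed but there are infinitely many of them, and (for instance when $\dim S=1$) they might conspire to cover every closed point of $S$. I expect this to be the main obstacle, and I would attack it by inserting a uniformity statement — bounding, independently of $s$, the period $m$ that is needed before $\{x:f_s^m(x)=x\}$ leaves $\mathcal{W}_s$, so that only finitely many $B_m$ are relevant — or else by choosing the spreading-out carefully enough that $\bigcup_m B_m$ cannot be so exhaustive.
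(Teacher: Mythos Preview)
The paper does not supply its own proof of this theorem: it is quoted from Fakhruddin \cite{Fak} and used as a black box (in Proposition~\ref{Iterates} and in the proof of Theorem~\ref{EndoThm}). So there is nothing in the paper to compare against; what follows concerns only whether your sketch reconstructs Fakhruddin's argument.

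Your overall strategy --- spread out over a finite-type base, prove density over finite residue fields via the Frobenius twist $\{x:f_s(x)=\Phi^n(x)\}$, then lift to the generic fibre --- is exactly Fakhruddin's. The finite-field step is correctly outlined.

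The gap you flag in the lifting step is real as you have set it up, but you have overlooked the ingredient that dissolves it. You already arranged that $\mathcal{F}^*\mathcal{L}\otimes\mathcal{L}^{-1}$ is relatively ample after shrinking $S$, so $f_s$ is amplified for \emph{every} $s\in S$, not just the generic point. Hence Proposition~\ref{Iterates}(3) applies fibre by fibre: $\mathrm{Fix}(f_s^m)$ is finite for all $s$ and all $m$, so $\mathrm{Fix}(\mathcal{F}^m)\to S$ is proper with finite fibres, i.e.\ finite. Now (shrinking so that $\mathcal{X}\to S$ is smooth, which is available whenever $X$ is smooth over $K$, the only case the paper actually uses) the diagonal $\Delta_{\mathcal{X}}\hookrightarrow\mathcal{X}\times_S\mathcal{X}$ is a regular closed immersion of codimension $\dim X$, so every irreducible component $C$ of $\mathrm{Fix}(\mathcal{F}^m)=\Gamma_{\mathcal{F}^m}\cap\Delta_{\mathcal{X}}$ has $\dim C\ge\dim S$. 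If $C$ failed to dominate $S$, its image would be a proper closed $T\subsetneq S$, and since $C\to T$ has finite fibres we would get $\dim C\le\dim T<\dim S$, a contradiction. Thus \emph{every} component of $\mathrm{Fix}(\mathcal{F}^m)$ dominates $S$; its generic-fibre points lie in $W$ by hypothesis, so the whole component lies in $\mathcal{W}$. Consequently $\mathrm{Per}(f_s)\subseteq\mathcal{W}_s$ for \emph{every} closed $s$ --- your sets $B_m$ are all empty, no countable avoidance is needed, and any single closed point of $S$ gives the contradiction. The ``uniformity'' you were looking for is precisely the fibrewise amplification you had already built in.
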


As noted in Remark \ref{UFDMM}, unity-free endomorphisms are likely not the right setting for a dynamical Manin-Mumford conjecture; however, Fakhruddin's theorem gives hope that a dynamical Manin-Mumford conjecture may hold in the much broader setting of amplified endomorphisms.

\subsection{The Implication Diagram for Varieties with Non-Trivial Albanese Maps}\label{Diagram}

\begin{proof}[Proof of Theorem \ref{EndoThm}] Suppose $X$ is a smooth complex projective variety with non-trivial Albanese, $f: X \rightarrow X$ is a dominant, amplified endomorphism, and $f$ is not unity-free. By Proposition \ref{Iterates}, these conditions will hold for any iterate of $f$ as well.  By Theorem \ref{Fak}, some iterate of $f$ has a fixed point.  Replacing $f$ by this iterate, $f$ has a fixed point, and so the Albanese map can be chosen so that the induced map $F: \text{Alb}(X) \rightarrow \text{Alb}(X)$ is an isogeny.

Since $f$ is not unity-free, $F$ is not unity-free.  By Proposition \ref{MainProp}, $\text{Alb}(X)$ contains a positive-dimensional abelian subvariety $T$ which is pointwise fixed by some iterate of $f$.  Replace $f$ by an iterate to assume $T$ is pointwise fixed by $F$.   Since $\alpha_X: X \rightarrow \text{Alb}(X)$ has image which generates $\text{Alb}(X)$ as a group, there exists a positive integer $M$ such that the map 
$$\alpha_M: X^{\times 2M} \rightarrow \text{Alb}(X)$$
given by
$$\alpha_M(x_1, \dots, x_{2M}) := \alpha_X(x_1) + \cdots + \alpha_X(x_M) - \alpha_X(x_{M+1}) - \cdots - \alpha_X(x_{2M})$$
satisfies $T \subset \alpha_M(X).$  Let $f_{2M} : X^{\times 2M} \rightarrow X^{\times 2M}$ denote the coordinate-wise application of $f$ to $X^{\times 2M}$.  Since $f$ is amplified, there exists a line bundle $L \in \Pic(X)$ with $f^*(L) \otimes L^{-1}$ ample; then $f_{2M}$ is amplified with respect to the bundle $\pi_1^*(L) \otimes \cdots \otimes \pi_{2M}^*(L)$, where $\pi_j$ is the usual projection to the $j$th coordinate.  By definition, $f_{2M}$ fixes the fiber $S_t$ over any point $t \in T$; since $f_{2M}$ is amplified, Proposition \ref{Iterates} and Theorem \ref{Fak} imply that each fiber $S_t$ contains a Zariski dense subset of periodic points.  Since there are uncountably many such fibers, there must be some positive integer $N$ such that infinitely many points in $X^{\times 2M}$ have exact period $N$.  Therefore $X^{\times 2M}$ contains a positive-dimensional subvariety which is pointwise fixed by $f_{2M}^N$, which is a contradiction by Proposition \ref{Iterates}. 
\end{proof}

\subsection{Converse Directions in the Implication Diagram}\label{RevImp}

We now make some remarks on the relative strengths of the various types of endomorphisms defined throughout.  By Theorem \ref{EndoThm}, we have the following diagram for any endomorphism of a smooth complex projective variety $X$ with non-trivial Albanese:
\vspace{.05in}
\[
\text{polarized} \implies \text{amplified} \implies \text{unity-free} \implies \text{infinite-order}.
\]

In general, none of the reverse implications are true; we provide examples from the right-hand side of the diagram to the left.  By Proposition \ref{MainProp}, the product of any infinite-order endomorphism on an abelian variety $A$ with the identity map $id_A$ will have infinite-order, but not be unity-free.  By Remark \ref{UFDMM}, there exist unity-free endomorphisms whose periodic points are contained in a proper subvariety; by Theorem \ref{Fak}, such an endomorphism cannot be amplified.  Finally, consider the map $\phi := \left[ 2 \right] \times \left[ 3 \right]$ on the product $E \times E$ of an elliptic curve $E$.  The eigenvalues of $\phi^*$ on $H^{1,1}(E \times E)$ are 4, 6, and 9. So, by Theorem \ref{Serre}, $\phi$ is not polarized; on the other hand, since 1 is not an eigenvalue of \(f^*\) on \(\NS(X)\), \(\phi\) is amplified.

Note that the above counterexamples to the reverse implications were given for {\it abelian} varieties, except for the unity-free, non-amplified example.  Additionally, these counterexamples could occur in all dimensions $\geq 2$.  It is perhaps surprising then that for abelian surfaces, unity-free does imply amplified.

\begin{proposition} \label{abeliansurfaces}
Let $f$ be a surjective endomorphism of an abelian surface $X$ which is not amplified.  Then $f$ is not unity-free.
\end{proposition}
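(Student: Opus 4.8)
The plan is to prove the contrapositive: \emph{if $f$ is unity-free then $f$ is amplified}. Write $f^{*}=\phi_{f}^{*}$ and let $\gamma_{1},\gamma_{2}$ be the eigenvalues of $f^{*}$ on $H^{1,0}(X)$, so unity-freeness says neither $\gamma_{i}$ is a root of unity. Since ampleness depends only on numerical classes, $f$ is amplified exactly when $\mathrm{im}(f^{*}-\mathrm{id})$ meets the ample cone in $\NS(X)_{\mathbb{R}}$; this is automatic unless $f^{*}-\mathrm{id}$ is non-invertible on $\NS(X)_{\mathbb{Q}}$, i.e.\ unless $1$ is an eigenvalue of $f^{*}$ on $\NS(X)_{\mathbb{Q}}$. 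As $\NS(X)_{\mathbb{Q}}$ is an $f^{*}$-stable rational subspace whose complexification lies in $H^{1,1}(X)$, and as $H^{2}(X,\mathbb{C})=\wedge^{2}H^{1}(X,\mathbb{C})$ for an abelian variety gives $f^{*}$ the eigenvalues $\gamma_{i}\overline{\gamma_{j}}$ on $H^{1,1}(X)$, this forces $1\in\{\,|\gamma_{1}|^{2},\,|\gamma_{2}|^{2},\,\gamma_{1}\overline{\gamma_{2}},\,\overline{\gamma_{1}}\gamma_{2}\,\}$. I then split into two cases.

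\emph{Case $|\gamma_{i}|=1$ for some $i$} (say $i=1$). Here I would show $\gamma_{1}$ must be a root of unity, contradicting the hypothesis, so this case does not occur. The tool is the characteristic polynomial $P(t)=\det\!\big(tI-f^{*}|_{H^{1}(X,\mathbb{Z})}\big)\in\mathbb{Z}[t]$: it is monic of degree $4$ with root multiset $\{\gamma_{1},\overline{\gamma_{1}},\gamma_{2},\overline{\gamma_{2}}\}$ and $P(0)=\deg\phi_{f}=|\gamma_{1}\gamma_{2}|^{2}=|\gamma_{2}|^{2}=:d$. If also $|\gamma_{2}|=1$, then $d=1$ and every root of $P$ lies on the unit circle, so Kronecker's theorem finishes it; otherwise $d\ge 2$, and writing $P(t)=(t^{2}-at+1)(t^{2}-bt+d)$ with $a=2\Re(\gamma_{1})$, comparison of the coefficients of $t^{3}$ and $t$ in $P\in\mathbb{Z}[t]$ gives $a(d-1)\in\mathbb{Z}$, hence $a\in\mathbb{Q}$; since $\gamma_{1}$ is an algebraic integer satisfying $t^{2}-at+1$, this forces $a\in\mathbb{Z}$ with $|a|\le 2$, so $\gamma_{1}$ is a root of unity.

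\emph{Case $\gamma_{1}\overline{\gamma_{2}}=1$ with $|\gamma_{1}|,|\gamma_{2}|\ne 1$.} Then $\deg\phi_{f}=|\gamma_{1}\gamma_{2}|^{2}=1$, so $\phi_{f}$ is an automorphism; relabel so $|\gamma_{1}|>1$. Now $f^{*}$ preserves the intersection form on $\NS(X)_{\mathbb{R}}$, which by the Hodge Index Theorem has signature $(1,\rho-1)$ with $\rho=\operatorname{rank}\NS(X)$ and whose ample cone is a connected component of $\{D:D^{2}>0\}$; using $\gamma_{2}=1/\overline{\gamma_{1}}$, the eigenvalues of $f^{*}$ on $\NS(X)_{\mathbb{R}}$ lie in $\{\,|\gamma_{1}|^{2},1,|\gamma_{1}|^{-2}\,\}$. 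Since the eigenvalue multiset of a form-isometry is stable under $\mu\mapsto\mu^{-1}$, either both $|\gamma_{1}|^{\pm2}$ occur as eigenvalues on $\NS(X)_{\mathbb{R}}$, or $f^{*}|_{\NS(X)_{\mathbb{R}}}$ is unipotent. I would exclude the unipotent alternative: if $f^{*}|_{\NS(X)}=\mathrm{id}$ then $\phi_{f}$ preserves a polarization and so lies in the \emph{finite} automorphism group of a polarized abelian variety, hence has finite order; and if $f^{*}|_{\NS(X)_{\mathbb{R}}}$ is a non-trivial unipotent isometry of the Lorentzian lattice $\NS(X)$, it fixes a non-zero isotropic class, which one may take rational and nef, and such a class comes from a rank-one positive semidefinite Hermitian form on the universal cover $\mathbb{C}^{2}$ whose kernel is a rational line — that is, from an elliptic subcurve $E\subseteq X$ which $\phi_{f}$ preserves (it preserves the Hermitian form, hence its kernel); then $\phi_{f}$ restricts to an automorphism of $E$ and descends to one of $X/E$, and automorphisms of elliptic curves act with finite order on $H^{1}$, so $\phi_{f}$ again has finite order on $H^{1}(X)$. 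Either way $\gamma_{1}$ would be a root of unity, which is excluded. Hence $|\gamma_{1}|^{\pm2}$ are eigenvalues of $f^{*}|_{\NS(X)_{\mathbb{R}}}$ with eigenvectors $\theta^{+},\theta^{-}$; because $|\gamma_{1}|^{\pm2}\ne 1$ and $f^{*}$ is an isometry, $(\theta^{\pm})^{2}=0$ and $\theta^{+}\!\cdot\theta^{-}\ne 0$ (signature $(1,\rho-1)$ admits no totally isotropic plane). Since $(f^{*}-\mathrm{id})\theta^{\pm}$ is a non-zero multiple of $\theta^{\pm}$, $\mathrm{im}(f^{*}-\mathrm{id})$ contains $\theta^{+}\pm\theta^{-}$, which for a suitable sign has positive self-intersection; adjusting sign and scaling to an integral class, $\mathrm{im}(f^{*}-\mathrm{id}|_{\NS(X)})$ contains an ample class $f^{*}[L]-[L]$, so $f$ is amplified.

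I expect the second case to be the main obstacle: the delicate point is that the expanding eigenvalue $|\gamma_{1}|^{2}$ of a positive-entropy automorphism might a priori sit only in the transcendental lattice rather than in $\NS(X)$, and it is exactly the exclusion of the two degenerate behaviours — an invariant polarization, or an invariant elliptic fibration — that pins it into $\NS(X)$. Each exclusion rests on a finiteness statement (for automorphisms of polarized abelian varieties, respectively of elliptic curves) together with $\gamma_{1}$ not being a root of unity; assembling these with the Hodge-index linear algebra is the substantive part of the proof.
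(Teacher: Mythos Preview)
Your argument is correct; the setup and the case $|\gamma_i|=1$ are essentially the paper's (you reach Kronecker via an explicit coefficient comparison in the factorization $P(t)=(t^2-at+1)(t^2-bt+d)$, the paper via a Galois-conjugation argument that the minimal polynomial of $\gamma_1$ is reciprocal). The positive-entropy automorphism case, however, is handled quite differently. The paper works on all of $H^{1,1}(X)$: since the eigenvectors for $|\gamma_1|^{\pm 2}$ already span a signature-$(1,1)$ plane in the signature-$(1,3)$ space, the $1$-eigenspace is negative definite, so any $f$-periodic curve would have negative self-intersection and hence, by adjunction on an abelian surface, be rational --- impossible on an abelian variety; it then invokes a separate general lemma (Lemma~\ref{surfautomlemma}, resting on Salem-number structure) that a positive-entropy surface automorphism with no periodic curves is amplified. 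You instead stay on $\NS(X)_{\mathbb{R}}$ and split further: if $|\gamma_1|^{\pm 2}$ appear there, the two isotropic eigenvectors span a hyperbolic plane inside $\operatorname{im}(f^*-\mathrm{id})$, from which you extract an ample class directly; if $f^*|_{\NS}$ is unipotent, you force $\phi_f$ to have finite order either via a fixed polarization (and finiteness of $\operatorname{Aut}(X,L)$) or via an invariant elliptic subcurve coming from the Appell--Humbert form of a fixed nef isotropic class. Your route is self-contained and avoids the Salem-number lemma entirely, at the price of a longer case analysis and some abelian-surface-specific input (the Appell--Humbert description of $\NS$, and that the nef cone coincides with a full component of the positive cone so an isotropic fixed class may be taken nef); the paper's route is shorter once Lemma~\ref{surfautomlemma} is in hand, and that lemma is reusable for arbitrary smooth projective surfaces, which the paper exploits elsewhere.
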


\begin{proof} Let $\gamma_1$ and $\gamma_2$ be the eigenvalues of $f^*$ on $H^{1,0}(X)$. Then the eigenvalues of $f^*$ on $H^{0,1}(X)$ are $\overline{\gamma_1}$ and $\overline{\gamma_2}$, and the eigenvalues of $f^*$ on $H^{1,1}(X)$ are $|\gamma_1|^2$, $\gamma_1 \overline{\gamma_2}$, $\overline{\gamma_1} \gamma_2$, and $|\gamma_2|^2$. Since \(f\) is not amplified, 1 is an eigenvalue of \(f^*\) on \(H^{1,1}(X)\).

If $|\gamma_1|=1$, then $\gamma_1^{-1}$ is a Galois conjugate of $\gamma_1$. So the minimal polynomial for $\gamma_1$, which is a factor in the characteristic polynomial for $f^*$ on $H^1(X,\mathbb{Z})$, is reciprocal--and hence has constant term 1. If $\gamma_2$ is a Galois conjugate of $\gamma_1$, then so is $\gamma_2^{-1}$--which forces $|\gamma_2|=1$. So, whether or not $\gamma_2$ is a Galois conjugate of $\gamma_1$, it follows from Kronecker's theorem that the minimal polynomial for $\gamma_1$ is cyclotomic. (A similar argument applies if $|\gamma_2|=1$.)

If $|\gamma_1| \neq 1$ and $|\gamma_2| \neq 1$, then $\gamma_1 \overline{\gamma_2} = \overline{\gamma_1} \gamma_2 = 1$. So the topological degree of $f$--that is, the eigenvalue of $f^*$ on $H^4(X)$--is $\gamma_1 \gamma_2 \overline{\gamma_1} \overline{\gamma_2} = 1$; thus $f$ is an automorphism. Since $|\gamma_2|=|\gamma_1|^{-1} \neq 1$, $f$ has positive entropy. Since the signature of the intersection form on $H^{1,1}(X)$ is (1,3) and the signature of the subspace of $H^{1,1}(X)$ generated by the eigenvectors for $|\gamma_1|^2$ and $|\gamma_2|^2$ is (1,1), the eigenspace for the eigenvalue 1 must be negative definite. So any periodic curve for $f$ would necessarily have negative self-intersection, and hence by the adjunction formula would necessarily be a rational curve; but abelian varieties cannot contain rational curves. Thus $f$ has no periodic curves, and Lemma \ref{surfautomlemma} below contradicts the assumption that $f$ is not amplified.
\end{proof}

\begin{lemma}\label{surfautomlemma}
Let $f$ be an automorphism with positive entropy of a smooth complex projective surface $X$. If $f$ has no periodic curves, then $f$ is amplified.
\end{lemma}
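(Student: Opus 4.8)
The plan is to read amplifiedness off directly from the action of $f^*$ on $\NS(X)_{\mathbb R}$, with no appeal to the classification of surfaces. Since ampleness of a line bundle depends only on its numerical class and $c_1\colon\Pic(X)\to\NS(X)$ is surjective, $f$ is amplified if and only if the image of $f^*-\ID$ acting on $\NS(X)_{\mathbb R}$ contains an ample class (for the non-trivial direction, clear denominators and lift back to $\Pic(X)$). Because $f$ is an automorphism, $f^*$ is an isometry of the intersection form, so for $w\in\ker(f^*-\ID)$ and any $v$ we get $(f^*-\ID)(v)\cdot w=f^*v\cdot f^*w-v\cdot w=0$; hence $\mathrm{image}(f^*-\ID)\subseteq\ker(f^*-\ID)^{\perp}$, and by rank--nullity this inclusion is an equality. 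Writing $V_1=\ker(f^*-\ID)$, the image is $V_1^{\perp}$. Now a linear subspace meets the interior of a full-dimensional closed convex cone exactly when its annihilator meets the dual cone only at the origin; applied to the nef cone $\mathrm{Nef}(X)$ — whose interior is the ample cone and whose dual is the Mori cone $\overline{NE}(X)$, equal on a surface to the pseudoeffective cone — this shows $V_1^{\perp}$ contains an ample class if and only if $V_1\cap\overline{NE}(X)=\{0\}$. So it suffices to prove that if $f$ has no periodic curve, then $V_1\cap\overline{NE}(X)=\{0\}$.

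The hypothesis of positive entropy enters only to force $V_1$ to be negative definite. Indeed, if some $0\neq v\in V_1$ had $v^{2}\geq0$, then $f^*$ would be a Lorentzian isometry of $\NS(X)_{\mathbb R}$ fixing a non-negative class: by the Hodge index theorem $v^{\perp}$ is negative semidefinite, $f^*$ preserves $v^{\perp}$ and acts there with spectral radius $1$, and together with $f^*v=v$ this makes the spectral radius of $f^*$ on $\NS(X)_{\mathbb R}$ — that is, the first dynamical degree $\lambda_1(f)=e^{h_{\mathrm{top}}(f)}$ — equal to $1$, contradicting positive entropy. Thus every nonzero element of $V_1$ has negative self-intersection.

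The third step is the heart of the matter, argued by contradiction. Suppose $0\neq\ell\in V_1\cap\overline{NE}(X)$; then $\ell^{2}<0$, so $\ell$ is pseudoeffective of negative self-intersection and admits a Zariski decomposition $\ell=P_\ell+N_\ell$, with $P_\ell$ nef, $N_\ell$ an effective $\mathbb R$-divisor whose components have negative definite intersection matrix, and $P_\ell\cdot N_\ell=0$. From $\ell^{2}=P_\ell^{2}+N_\ell^{2}$ and $P_\ell^{2}\geq0$ we get $N_\ell\neq0$. Because $f$ is an automorphism, $f^*P_\ell$ is nef, $f^*N_\ell$ is effective with negative definite support, and $f^*P_\ell\cdot f^*N_\ell=0$; so $f^*\ell=\ell$ together with the uniqueness of the Zariski decomposition forces $f^*N_\ell=N_\ell$ as divisors. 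Hence $f^{-1}$ permutes the finitely many irreducible components of $N_\ell$, and each of these components is an $f$-periodic curve, contradicting the hypothesis. Therefore $V_1\cap\overline{NE}(X)=\{0\}$, and by the first paragraph $f$ is amplified.

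I expect this last step to be the only genuine obstacle: it is where the numerical identity $f^*\ell=\ell$ gets promoted to an honest $f$-invariance of the effective divisor $N_\ell$, and it relies on the existence of Zariski decompositions for pseudoeffective $\mathbb R$-divisors on smooth projective surfaces together with their functoriality under automorphisms (the pullback of a negative part is again a negative part). The remaining ingredients — the Hodge index theorem, the convex duality between the nef and pseudoeffective cones, and the elementary reduction in the first paragraph — are standard, and in particular the argument never needs the Enriques--Kodaira classification or any special features of rational, K3, or abelian surfaces.
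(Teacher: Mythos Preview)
Your proof is correct and takes a genuinely different route from the paper's. The paper exploits the Salem-number structure of positive-entropy surface automorphisms: it isolates an $f^*$-stable sublattice $\NS(X)'\le\NS(X)$ on which the characteristic polynomial of $f^*$ is the minimal polynomial of the Salem number $e^{h_{\mathrm{top}}(f)}$, so that $f^*-\ID$ is automatically invertible there, and then asserts (deferring the details to \cite{Res}) that the absence of periodic curves forces $\NS(X)'$ to contain ample classes. You instead identify $\operatorname{image}(f^*-\ID)$ with $V_1^{\perp}$ directly from the isometry property, translate ``contains an ample class'' into the cone condition $V_1\cap\overline{NE}(X)=\{0\}$ by convex duality, and then use uniqueness of the negative part of a Zariski decomposition to promote a fixed numerical class $\ell$ with $\ell^2<0$ to an honestly $f$-invariant effective divisor, hence a periodic curve. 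Your argument is more self-contained---the Salem factorisation never appears, positive entropy is used only to make $V_1$ negative definite, and the Zariski machinery cleanly handles the passage from fixed \emph{class} to fixed \emph{curve}---while the paper's route encodes finer structural information about the $f^*$-decomposition of $\NS(X)$ but outsources the crucial implication to an external reference. One small wrinkle worth tightening: in the isotropic case $v^2=0$ you have $v\in v^{\perp}$, so ``together with $f^*v=v$'' does not yet account for the quotient $\NS(X)_{\mathbb R}/v^{\perp}$; but $f^*$ acts trivially there as well, since $v\cdot f^*w=f^*v\cdot f^*w=v\cdot w$ for every $w$.
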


\begin{proof}
(See \cite{Res} for details.) The entropy of $f$ is $\log(\lambda)$ for some Salem number $\lambda$. There is a sublattice $NS(X)' \leq NS(X)$ such that $f^*$ preserves $NS(X)'$ and the characteristic polynomial here is the minimal polynomial for $\lambda$. So, in particular, $f^*-\ID$ is surjective on $NS(X)'$. Since $f$ has no periodic curves, $NS(X)'$ contains classes of ample line bundles.
\end{proof}

The proof given of Proposition \ref{abeliansurfaces} intrinsically uses the well-understood intersection theory of abelian surfaces.  However, in light of Proposition \ref{AbVarZDense}, it seems possible that all unity-free endomorphisms of an abelian variety are amplified; we leave the question for the reader and future exploration.

Lemma \ref{surfautomlemma} shows that the types of varieties admitting polarized endomorphisms are more restricted than those admitting amplified endomorphisms. Indeed, there are many examples of K3 surface automorphisms with positive entropy and no periodic curves. On the other hand, a K3 surface can never admit a polarized endomorphism. (See, e.g., \cite{FujNak}.)

\bibliographystyle{abbrv}
\bibliography{ReschkeP-refs-2013.08.04}

\begin{thebibliography}{10}

\bibitem{BHPV}
W.~Barth, K.~Hulek, C.~Peters, and A.~van~de Ven.
\newblock {\em Compact Complex Surfaces}.
\newblock Springer, 2004.

\bibitem{CalSil}
G.~Call and J.~Silverman.
\newblock Canonical heights on varieties with morphisms.
\newblock {\em Compos. Math.}, 89(2):163--205, 1993.

\bibitem{Deb}
O.~Debarre.
\newblock {\em Complex Tori and Abelian Varieties}.
\newblock American Mathematical Society, Soci{\'e}t{\'e} Math{\'e}matique de
  France, 2005.

\bibitem{DinNguTru}
T.-C. Dinh, V.-A. Nguyen, and T.~Truong.
\newblock On the dynamical degree of meromorphic maps preserving a fibration.
\newblock {\em Commun. Contemp. Math.}, 14(6):18 pages, 2012.

\bibitem{Fak}
N.~Fakhruddin.
\newblock Questions on self-maps of algebraic varieties.
\newblock {\em J. Ramanujan Math. Soc.}, 18(2):109--122, 2003.

\bibitem{FujNak}
Y.~Fujimoto and N.~Nakayama.
\newblock Compact complex surfaces admitting non-trivial surjective
  endomorphisms.
\newblock {\em Tohoku Math. J.}, 57(3):395--426, 2005.

\bibitem{GhiTucZha}
D.~Ghioca, T.~Tucker, and S.~Zhang.
\newblock Towards a dynamical {M}anin-{M}umford conjecture.
\newblock {\em Int. Math. Res. Not.}, 2011(22):5109--5122, 2011.
\newblock doi:10.1093/imrn/rnq283.

\bibitem{Hin}
M.~Hindry.
\newblock Introduction to abelian varieties and the mordell-lang conjecture.
\newblock In E.~Bouscaren, editor, {\em Model Theory and Algebraic Geometry},
  pages 85--100. Springer, 1998.

\bibitem{Huy}
D.~Huybrechts.
\newblock {\em Complex Geometry}.
\newblock Springer, 2005.

\bibitem{Kaw}
Y.~Kawamata.
\newblock Characterization of abelian varieties.
\newblock {\em Compos. Math.}, 43:253--276, 1981.

\bibitem{Kob}
S.~Kobayashi.
\newblock {\em Hyperbolic Complex Spaces}.
\newblock Springer, 1988.

\bibitem{KobOch}
S.~Kobayashi and T.~Ochiai.
\newblock Meromorphic mappings onto compact complex spaces of general type.
\newblock {\em Invent. Math.}, 31:7--16, 1975.

\bibitem{Mum}
D.~Mumford.
\newblock {\em Abelian Varieties}.
\newblock Tata Institute of Fundamental Research, 2012.

\bibitem{NakZha}
N.~Nakayama and D.-Q. Zhang.
\newblock Polarized endomorphisms of complex normal varieties.
\newblock {\em Math. Ann.}, 346(4):991--1018, 2012.

\bibitem{OguTru}
K.~Oguiso and T.~Truong.
\newblock Salem numbers in dynamics of {K}\"ahler 3-folds and complex tori.
\newblock {\em Math. Z.}, 278:93--117, 2014.

\bibitem{Paz}
F.~Pazuki.
\newblock Zhang's conjecture and squares of abelian surfaces.
\newblock {\em C. R. Math. Acad. Sci.}, 348(9--10):483--486, 2010.

\bibitem{PinRoe}
R.~Pink and D.~Roessler.
\newblock On {H}rushovski's proof of the {M}anin-{M}umford conjecture.
\newblock In {\em Proceedings of the International Congress of Mathematicians},
  volume~1, pages 539--546, Beijing, 2002. High. Ed. Press.

\bibitem{Ray}
M.~Raynaud.
\newblock Sous-vari{\'e}t{\'e}s d'une vari{\'e}t{\'e} ab{\'e}lienne et points
  de torsion.
\newblock In {\em Progress in Mathematics: Arithmetic and geometry, vol. I},
  volume~35, pages 327--352. Birkh{\"a}user, 1983.

\bibitem{Res}
P.~Reschke.
\newblock Distinguished line bundles for complex surface automorphisms.
\newblock {\em Transform. Groups}, 19(1):225--246, 2014.

\bibitem{Ser}
J.-P. Serre.
\newblock Analogues k{\"a}hl{\'e}riens de certaines conjectures de {W}eil.
\newblock {\em Ann. Math.}, 71(2):392--394, 1960.

\bibitem{Uen1}
K.~Ueno.
\newblock Classification of algebraic varieties, {I}.
\newblock {\em Compos. Math.}, 27(3):277--342, 1973.

\bibitem{Uen2}
K.~Ueno.
\newblock Classification of algebraic manifolds.
\newblock In {\em Proceedings of the International Congress of Mathematicians},
  pages 549--556, Helsinki, 1978.

\bibitem{Zha}
S.~Zhang.
\newblock Distributions in algebraic dynamics.
\newblock In {\em Survey in Differential Geometry}, volume~10, pages 381--430.
  Int. Press, 2006.

\end{thebibliography}

\end{document}